\documentclass[12pt]{amsart}
\usepackage{amsmath,amssymb,amsfonts,amsthm,amsopn}
\usepackage{graphics}
\usepackage{srcltx}
\usepackage{color}

\usepackage{mathtools}
\setlength{\textwidth}{13,5cm} \setlength{\textheight}{20cm}

\setlength{\oddsidemargin}{0pt}
\setlength{\evensidemargin}{0pt}
\setlength{\textwidth}{148 mm}   


\newcommand{\tfs}{time-frequency shift}

\newtheorem{tm}{Theorem}[section]
\newtheorem{lemma}[tm]{Lemma}

\newtheorem{theorem}{Theorem}[section]
\newtheorem{corollary}[theorem]{Corollary}
\newtheorem{definition}[theorem]{Definition}

\newtheorem{proposition}[theorem]{Proposition}
\newtheorem{remark}[theorem]{Remark}

\newcommand{\beqa}{\begin{eqnarray*}}
\newcommand{\eeqa}{\end{eqnarray*}}
\newcommand{\tpsdo}{$\tau$-pseudo\-differential operator}
\DeclareMathOperator*{\supp}{supp}

\newcommand{\field}[1]{\mathbb{#1}}
\newcommand{\bR}{\field{R}}        
\newcommand{\bN}{\field{N}}        
\newcommand{\bZ}{\field{Z}}        
        %
        %
\newcommand{\opt }{\mathrm{Op}_{\tau}}
\newcommand{\opj }{\mathrm{Op}_{BJ}}

\def\Q{\mathcal{Q}}
\def\al{\alpha}                    
\def\be{\beta}

\def\la{\lambda}

\def\eps{\epsilon}

 \def\cF{\mathcal{F}}              
 \def\cS{\mathcal{S}}
 \def\cD{\mathcal{D}}

 \def\cG{\mathcal{G}}

 \def\cC{\mathcal{C}}

\def\a{\aleph}

\def\rd{\bR^d}

\def\rdd{{\bR^{2d}}}
\def\zdd{{\bZ^{2d}}}

\def\lrd{L^2(\rd)}

\def\zd{\bZ^d}

\def\intrd{\int_{\rd}}
\def\intrdd{\int_{\rdd}}

\def\R{\right)}

\def\<{\left<}
\def\>{\right>}

\def\mv1{M_v^1}

\def\mpq{M^{p,q}}

\def\phas{(x,\omega )}


\newcommand{\abs}[1]{\lvert#1\rvert}
\newcommand{\norm}[1]{\lVert#1\rVert}

\hyphenation{Cara-theo-do-ry}
\hyphenation{Dau-be-chies}
\hyphenation{Barg-mann}
\hyphenation{dis-tri-bu-ti-ons}
\hyphenation{pseu-do-dif-fe-ren-tial}
\hyphenation{ortho-normal}



\def\o{\omega}
\def\a{\alpha}

\def\ZZ{\mathbb{Z}}

\def\R{\mathbb{R}}
\def\Ren{\mathbb{R}^d}
\def\Renn{\mathbb{R}^{2d}}

\def\Sn2{S_{2}(L^{2}(\Ren))}
\def\S1{S_{1}(L^{2}(\Ren))}
\def\sig00{\sigma_{0,0}}

\def\la{\langle}
\def\ra{\rangle}




\renewcommand{\zeta}{u}

\begin{document}

\title[]{Characterization of smooth symbol classes by Gabor matrix decay}
\author{Federico Bastianoni}
\address{Depatment of Mathematical Sciences, Politecnico di Torino, corso
	Duca degli Abruzzi 24, 10129 Torino, Italy}
\email{federico.bastianoni@polito.it}

\author{Elena Cordero}
\address{Department of Mathematics,  University of Torino,
Via Carlo Alberto 10, 10123
Torino, Italy}
\email{elena.cordero@unito.it}

\keywords{Time-frequency analysis, modulation spaces,  Gabor matrix,  pseudodifferential operators, Gabor frames}

\subjclass[2010]{47G30,42B35,81S30}

\date{}

\begin{abstract} For $m\in\bR$ we consider  the symbol classes $S^m$, $m\in\bR$, consisting of smooth functions $\sigma$ on $\rdd$ such that $|\partial^\alpha \sigma(z)|\leq C_\alpha (1+|z|^2)^{m/2}$, $z\in\rdd$, and we show that can be characterized by an intersection of different types of modulation spaces. In the case $m=0$ we recapture the H\"{o}rmander class $S^0_{0,0}$ that can be obtained by intersection of suitable  Besov spaces as well. Such spaces contain the Shubin classes $\Gamma^m_\rho$, $0<\rho\leq1$, and can be viewed as their limit case $\rho=0$.  We exhibit almost diagonalization properties for the Gabor matrix  of $\tau$-pseudodifferential operators with symbols in such classes,   extending the characterization proved by Gr\"{o}chenig and Rzeszotnik in \cite{GR}. Finally, we compute the Gabor matrix of a Born-Jordan operator, which allows to prove new boundedness results for such operators. 
\end{abstract}

\maketitle

\section{Introduction and results}

Modulation spaces  were originally introduced by Feichtinger \cite{F1} in 1983   and have revealed to be very useful in many different frameworks, which include harmonic analysis, quantum mechanics, pseudodifferential and Fourier integral operators, partial differential equations (we refer the reader to Section $2$ for their definitions and main properties).

Several authors have studied inclusion relations of such spaces with other classical function spaces such as Besov, Triebel-Lizorkin Gelfand-Shilov spaces \cite{Guo-incl-mod-Triebel2017,sugimototomita,Toftweight2004,Wangbook2011}. In particular, when they are considered as symbol classes for pseudodifferential or Fourier integral operators, their relationship  with classical symbol spaces such as the H\"{o}rmander classes or the Shubin-Sobolev spaces has been investigated in many contributions (see e.g.,  \cite{BCG,Elena-book,NR,ToftquasiBanach2017} and the references therein). 

 In 1994 Sj\"ostrand \cite{wiener30} introduced the first symbol class via  time-frequency concentration on the phase-space, the Sj\"ostrand  class, which later revealed to be a type of modulation space. This rough symbol class have been inspired many works on pseudodifferential operators with symbols in modulation spaces (see, e.g., \cite{benyi,Bi2010,Elena-book,grochenig2,CharlyToft2011,Nenad2016,Nenad2018,PT-JFA2004,toft1,Toftweight2004} and the book \cite{Elena-book}). The contributions are so many that it is not possible to cite them all. 
 
 In \cite{Sjostrandfirst}  Sj\"ostrand   continued his study on pseudodifferential operators with rough symbols and he also considered the symbol class object of our study. Namely,  for $m\in\bR$, let us define    
\begin{equation}\label{Sm00}
S^m(\rdd)=\{\sigma\in\cC^\infty(\rdd):\, |\partial^\alpha\sigma(z)|\leq C_\alpha \la z\ra^m,\quad \a\in\mathbb{N}^{2d},\,\, z\in\rdd\},
\end{equation}
for the definition of $\la z\ra^m$ see \eqref{vs}. Notice that this is a special instance of the class $S(w)$ introduced in  \cite[Formula $(3.2)$]{Sjostrandfirst}.  

There were several papers/books in  the seventies and eighties where
this symbol class were considered. For example, the whole theory of
the Weyl calculus, e.g. in \cite{Bony1994} can be applied on this class.

Another work on pseudodifferential operators with symbols of the type above is due to 
Rochberg and K. Tachizawa \cite{rochberg}. Later, these classes were  considered as spaces for symbols of Fourier integral operators \cite[Remark 3.2]{fio1}.

For $m=0$ we recapture  the standard  H\"{o}rmander class $S^0_{0,0}(\rdd)$: pseudodifferential operators with these symbols are an algebra which is closed under inversion. This claim was originally proved by Beals in \cite{beals} and later recaptured by 
Gr{\"o}chenig and  Rzeszotnik in \cite{GR}, using time-frequency analysis; key tool was the almost diagonalization property of the related Gabor matrix.

We continue this spirit of investigation and present a characterization of pseudodifferential operators with symbols in $S^m(\rdd)$ in terms of the decay properties of the related Gabor matrix. Let us introduce the main features of this work.

For $\tau \in [0,1]$, the (cross-)$\tau $-Wigner distribution is the time-frequency representation defined by 
\begin{equation}
W_{\tau }(f,g)(x,\omega )=\int_{\mathbb{R}^{d}}e^{-2\pi iy\omega }f(x+\tau y)%
\overline{g(x-(1-\tau )y)}\,dy,\quad f,g\in \mathcal{S}(\mathbb{R}^{d}),
\label{tauwig}
\end{equation}
cf. \cite{Janssen1985}. Given any tempered distribution  $\sigma \in\cS'(\rdd)$, the \tpsdo\,  $ \opt  (\sigma)$ can be introduced weakly  as
\begin{equation}
\langle \opt  (\sigma)f,g\rangle =
\langle
\sigma,W_{\tau }(g,f)\rangle, \quad f,g\in \mathcal{S}(\mathbb{R}^{d}).
\label{tauweak}
\end{equation}
The Weyl form $\mathrm{Op_{W}}(\sigma)$ of a pseudodifferential  operator can be recaptured when $\tau=1/2$, the Kohn-Nirenberg case $\mathrm{Op_{KN}}(\sigma)$ corresponds to   $\tau=0$.

Given $z=(x,\omega)\in\rdd$, we define the related  \tfs \,
acting on a function or distribution $f$ on $\rd$ as
\begin{equation}
\label{eq:kh25}
\pi (z)f(t) = e^{2\pi i \omega t} f(t-x), \, \quad t\in\rd.
\end{equation}
Let us recall the definition of a Gabor frame. Given a lattice  $\Lambda=A\zdd$,  with $A\in GL(2d,\R)$, and a non-zero window function $g\in L^2(\rd)$, we define the \emph{Gabor system}: $$\cG(g,\Lambda)=\{\pi(\lambda)g:\
\lambda\in\Lambda\}.$$
  The Gabor system $\cG(g,\Lambda)$   is called 
a Gabor frame, if there exist
constants $A,B>0$ such that
\begin{equation}\label{gaborframe}
A\|f\|_2^2\leq\sum_{\lambda\in\Lambda}|\langle f,\pi(\lambda)g\rangle|^2\leq B\|f\|^2_2,\qquad \forall f\in L^2(\rd).
\end{equation}
Fix $g\in \cS(\rd)\setminus \{0\}$. The \emph{Gabor matrix} of a linear continuous operator $T$ from $\cS(\rd)$ to $\cS'(\rd)$ is defined to be 
\begin{equation}\label{unobis2s} \langle T \pi(z)
g,\pi(u)g\rangle,\quad z,u\in \rdd.
\end{equation}
This Gabor matrix can be viewed as the kernel of an integral operator, cf. Section $2$ for details. 

For $\tau\in [0,1]$,  define the change of variables 
\begin{equation}\label{CL}
\mathcal{T}_{\tau}(z,u)=((1-\tau)z_1+\tau u_1,\tau z_2+(1-\tau)u_2),\quad z=(z_1,z_2), u=(u_1,u_2)\in\rdd.
\end{equation}

We possess all the instruments for the characterization of  $S^m(\rdd)$:
\begin{theorem}\label{Th3.3}
	Consider  $g\in \cS(\rd)\setminus\{0\}$ and a lattice $\Lambda$ such that  $\mathcal{G}\left(g,\Lambda\right)$ is a  Gabor frame for $L^{2}\left(\mathbb{R}^{d}\right)$. Fix $m\in\bR$.
	For any $\tau\in\left[0,1\right]$,  the following properties are equivalent:
	\begin{enumerate}
		\item[$(i)$] $\sigma\in S^m\left(\mathbb{R}^{2d}\right)$.
		\item[$(ii)$] $\sigma\in\mathcal{S}'\left(\mathbb{R}^{2d}\right)$ and for every $s\geq0$, $0<q\leq\infty$, there exists
		 a function $H_\tau\in L^q_{\la \cdot\ra^{s}}(\rdd)$, with 
		 \begin{equation}\label{H-tau}\|H_\tau\|_{L^q_{\la \cdot\ra^{s}}}\leq C, \quad  \forall\tau\in [0,1],\end{equation} such that  
		\begin{equation}\label{Ineq-Th.3.3-continuous}
		\left|\left\langle \opt \left(\sigma\right)\pi\left(z\right)g,\pi\left(u\right)g\right\rangle \right|\le H_\tau(u-z) \la\mathcal{T}_{\tau}(z,u)\ra^m, \qquad\forall u,z\in\mathbb{R}^{2d}.
		\end{equation}
		\item[$(iii)$] $\sigma\in\mathcal{S}'\left(\mathbb{R}^{2d}\right)$ and for every $s\geq0$  there exists
		a sequence $h_\tau\in \ell^q_{\la \cdot\ra^{s}}(\Lambda)$ with $\|h_\tau\|_{\ell^q_{\la \cdot\ra^{s}}}\leq C$ for every $\tau\in [0,1]$, such that  
		\begin{equation}\label{Ineq-Th.3.3-continuous2}
		\left|\left\langle \opt \left(\sigma\right)\pi\left(\mu\right)g,\pi\left(\lambda\right)g\right\rangle \right|\le h_\tau(\lambda-\mu)\la\mathcal{T}_{\tau}(\mu,\lambda)\ra^m, \qquad\forall\lambda,\mu\in\Lambda.
		\end{equation}
	\end{enumerate}
\end{theorem}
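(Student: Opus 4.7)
The backbone is the identity, recalled in Section~2, that relates the Gabor matrix of a $\tau$-pseudo\-differential operator to the STFT of its symbol computed against the $\tau$-Wigner window:
\begin{equation}\label{plan-id}
\left|\langle \opt(\sigma)\pi(z)g,\pi(u)g\rangle\right| = \left|V_{W_\tau(g,g)}\sigma\bigl(\mathcal{T}_\tau(z,u),J(u-z)\bigr)\right|,
\end{equation}
where $J(x,\omega)=(\omega,-x)$ is the symplectic flip. The family $\{W_\tau(g,g)\}_{\tau\in[0,1]}$ is bounded in $\cS(\rdd)$ in every Schwartz seminorm, which is the source of all uniformity-in-$\tau$ claims. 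The second ingredient, established earlier in the paper, is the STFT-based characterization
$$\sigma\in S^m(\rdd)\iff \forall\,\Phi\in\cS(\rdd)\setminus\{0\},\ \forall N\geq0,\ \exists C_N>0:\ |V_\Phi\sigma(w,\zeta)|\leq C_N\langle w\rangle^m\langle \zeta\rangle^{-N}.$$

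For $(i)\Rightarrow(ii)$ I apply the STFT characterization with $\Phi=W_\tau(g,g)$; uniform boundedness of this family in every Schwartz seminorm yields constants $C_N$ independent of $\tau$. Plugging the result into \eqref{plan-id} gives \eqref{Ineq-Th.3.3-continuous} with $H_\tau(v)=C_N\langle v\rangle^{-N}$, and choosing $N=N(s,q,d)$ sufficiently large makes the $L^q_{\langle\cdot\rangle^s}$ norm finite uniformly in $\tau\in[0,1]$. The implication $(ii)\Rightarrow(iii)$ is a sampling step: $h_\tau:=H_\tau|_\Lambda$ inherits the uniform $\ell^q_{\langle\cdot\rangle^s}$-control from $H_\tau$, since the continuous envelopes produced in the previous step are regular and polynomially decaying.

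The delicate direction $(iii)\Rightarrow(i)$ uses Gabor frame reconstruction on phase space. Let $\tilde g\in\cS(\rd)$ be a dual window of $g$, so that $\cG(\tilde g,\Lambda)$ is the canonical dual Gabor frame. The tensor-product reproducing formula associated with $W_\tau(\tilde g,\tilde g)$ then yields, for any $\Phi\in\cS(\rdd)\setminus\{0\}$, a pointwise bound of the form
$$|V_\Phi\sigma(w,\zeta)|\leq \sum_{\mu,\lambda\in\Lambda}\left|\langle \opt(\sigma)\pi(\mu)g,\pi(\lambda)g\rangle\right|\,K^\tau_\Phi(w,\zeta;\mu,\lambda),$$
where the kernel $K^\tau_\Phi$ arises from cross-STFTs of the dual Wigner functions and is Schwartz in its variables, uniformly in $\tau$. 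Combining this with the hypothesis \eqref{Ineq-Th.3.3-continuous2} and a discrete Young-type convolution estimate produces $|V_\Phi\sigma(w,\zeta)|\leq C_N\langle w\rangle^m\langle\zeta\rangle^{-N}$ for every $N$, hence $\sigma\in S^m$ by the STFT characterization above.

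The main obstacle is precisely this last transport, from discrete off-diagonal decay $h_\tau\in\ell^q_{\langle\cdot\rangle^s}$ to continuous STFT control, while handling the twisted change of variables $\mathcal{T}_\tau$ and keeping all constants uniform in $\tau\in[0,1]$. The point is that the hypothesis in $(iii)$ is available for \emph{every} $s\geq0$ and \emph{every} $0<q\leq\infty$: this flexibility is what allows one to absorb the growth factor $\langle\mathcal{T}_\tau(\mu,\lambda)\rangle^m$ via the Schwartz kernel $K^\tau_\Phi$ and still output arbitrarily fast decay in $\zeta$, regardless of how small $q$ is chosen.
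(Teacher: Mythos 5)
Your overall architecture (the identity $\left|\langle \opt(\sigma)\pi(z)g,\pi(u)g\rangle\right| = \left|V_{W_\tau(g,g)}\sigma(\mathcal{T}_\tau(z,u),J(u-z))\right|$ plus the STFT characterization of $S^m$, which is the $q=\infty$ case of Lemma \ref{charsm00}) is exactly the paper's, and your $(i)\Rightarrow(ii)$ is sound: membership in $S^m$ gives $|V_{\Phi_\tau}\sigma(w,\zeta)|\leq C_N\la w\ra^m\la \zeta\ra^{-N}$ for every $N$, uniformly in $\tau$, and $\la\cdot\ra^{-N}$ lies in $L^q_{\la\cdot\ra^s}$ for $N$ large. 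Your $(iii)\Rightarrow(i)$ sketch is also the right idea (it is the frame-reconstruction argument of Gr\"ochenig that the paper invokes via \cite{grochenig2} and \cite{CGRNFIOJMPA2013}), although the claim that the kernel $K^\tau_\Phi$ is ``Schwartz uniformly in $\tau$'' is precisely where the work lies and would need to be carried out.

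The genuine gap is your step $(ii)\Rightarrow(iii)$. Sampling $h_\tau:=H_\tau|_\Lambda$ makes no sense for a general $H_\tau\in L^q_{\la\cdot\ra^s}(\rdd)$: an $L^q$ function is only defined almost everywhere, the lattice has measure zero, and even for a continuous representative the restriction to $\Lambda$ need not lie in $\ell^q_{\la\cdot\ra^s}$. You justify the sampling by saying that ``the continuous envelopes produced in the previous step are regular and polynomially decaying'' --- but those envelopes were produced from hypothesis $(i)$, not from hypothesis $(ii)$. So what you have actually established is $(i)\Rightarrow(iii)$, and your implication chain degenerates to $(i)\Rightarrow(ii)$, $(i)\Rightarrow(iii)$, $(iii)\Rightarrow(i)$: nothing leads out of $(ii)$, and the three conditions are not shown to be equivalent. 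The missing ingredient is the implication $(ii)\Rightarrow(i)$, which the paper obtains in Theorem \ref{teor41} by the change of variables $y=\mathcal{T}_\tau(z,u)$, $t=J(u-z)$: one bounds
\begin{equation*}
\norm{\sigma}_{M^{\infty,q}_{\la\cdot\ra^{-m}\otimes\la\cdot\ra^s}}\asymp\Bigl(\intrdd\bigl(\sup_{y}|V_{\Phi_\tau}\sigma(y,t)|\la y\ra^{-m}\bigr)^q\la t\ra^{qs}\,dt\Bigr)^{1/q}\leq \norm{H_\tau(J^{-1}\cdot)}_{L^q_{\la\cdot\ra^s}}\leq C,
\end{equation*}
and then intersects over $s\geq 0$ using Lemma \ref{charsm00} to conclude $\sigma\in S^m$. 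Once this is in place, your $(i)\Rightarrow(iii)$ by sampling the explicit envelope $C_N\la\cdot\ra^{-N}$ is legitimate, and the cycle closes. I would also point out that the paper factors the whole statement through a fixed-$(s,q)$ equivalence between $M^{\infty,q}_{\la\cdot\ra^{-m}\otimes\la\cdot\ra^s}$ membership and the corresponding Gabor matrix bound (Theorem \ref{teor41}), and only then takes the intersection over $s$; this is cleaner than quantifying over all $N$ inside each implication, but it is not a mathematical difference.
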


For  the  H\"{o}rmander class $S^0(\rdd)=S^0_{0,0}(\rdd)$, the Gabor matrix  characterization for Weyl operators was shown by Gr\"{o}chenig and Rzeszotnik in \cite[Theorem 6.2]{GR} (see also \cite{rochberg}) in the case $q=\infty$. So this result can be viewed as an extension to any $0<q\leq\infty$ and $\tau\in[0,1]$.

The central role in the proof of the result above is the  characterization of the class $S^m(\rdd)$ by an intersection of weighted modulation spaces (in particular, weighted Sj\"ostrand classes): for $0<q\leq\infty$,
$$
S^m(\rdd)=\bigcap_{s\geq 0} M^{\infty,q}_{\la \cdot\ra^{-m}\otimes \la \cdot\ra^s}(\rdd),
$$
cf.  Lemma \ref{charsm00}.

For the special case $m=0$, the  H\"{o}rmander class $S^0(\rdd)=S^0_{0,0}(\rdd)$ can also be represented as the intersection of Besov spaces and H\"{o}lder-Zygmund classes:
 $$S^0_{0,0}(\rdd)=\bigcap_{s\geq 0} \cC^s(\rdd)=  \bigcap_{s\geq 0} B^{\infty,q}_s(\rdd)= \bigcap_{s\geq 0} M^{\infty,q}_{1\otimes \la \cdot\ra^s}(\rdd),$$ 
 cf. Lemma \ref{Besov}, which   extends the characterization in \cite{GR}.

Observe that  $S^m$   contains the Shubin classes $\Gamma^m_\rho$, $0<\rho\leq1$, defined as \cite{shubin}
$$\Gamma^m_\rho(\rdd)=\{\sigma\in\cC^\infty(\rdd):\, |\partial^\alpha\sigma(z)|\leq C_\alpha \la z\ra^{m-\rho |\a|},\quad \a\in\mathbb{N}^{2d},\,\, z\in\rdd\}, $$ 
and can be viewed as their limit case $\rho=0$.
 The Shubin classes enjoy a symbolic calculus  very useful when dealing with the corresponding pseudodifferential operators. This is not the case of  $S^m(\rdd)$.  Hence, the characterization in Theorem \ref{Th3.3} might be an instrument to infer boundedness, composition, inversion properties of the corresponding operators in suitable function spaces, such as the modulation ones.  
 

As a byproduct, Theorem  \ref{Th3.3}   allows to compute the Gabor matrix decay of a Born-Jordan operator. We present  some continuity properties of the latter on weighted modulation spaces, extending the work \cite{CdNBJ-JFA}.

This study paves the way to other possible investigations. For instance, when the symbol $\sigma$ on $\rdd$
satisfies a Geverey-type regularity of order $s>0$:
\begin{equation}\label{C}
|\partial^{\a} \sigma(z)|\lesssim M(z) C^{|\alpha|}(\alpha!)^s,\quad\ \a\in\mathbb{N}^{2d},\
z\in\rdd,
\end{equation}
with $M$ any possible $v$-moderate weight (see Section $2$ for its definition).  These symbols were applied in \cite{CNRExsparse2015} to investigate the sparsity of the Gabor-matrix representation of Fourier integral operators.
In this case we conjecture that the right modulation spaces to be considered are of the type $M^{\infty,q}_{M\otimes e^{-\eps|\cdot|^{1/s}}}(\rdd)$.

Eventually, one might extend the characterization exhibited in Theorem \ref{Th3.3} to Fourier integral operators of Schr\"odinger-type with symbols in $S^m$ and suitable phases as in \cite{CGRNFIOJMPA2013}. This will be the object of a further work.

The paper is organized as follows. In Section $2$ we present the function spaces object of our study. In particular, we focus on modulation spaces and present the properties needed for our results. We then prove the characterization of the classes $S^m(\rdd)$ and in particular of the {H}\"{o}rmander classes $S^0_{0,0}(\rdd)$. Section $3$ is devoted to the study of the Gabor matrix for $\tau$-operators and Born-Jordan operators. As an application, boundedness results on modulation spaces are exhibited.

\section{Function spaces and preliminaries} 
 In this manuscript $\hookrightarrow$ denotes the continuous embeddings of function spaces. Recall that the conjugate exponent $p^{\prime }$ of $p\in \lbrack
1,\infty ]$ is defined by $1/p+1/p^{\prime }=1$.

The notation $y\omega$  means the inner product  $y\cdot\omega$, $|x|$ stands for the Euclidean norm of $x$ and $x^2$  means $|x|^2$.

We denote by $v$  a
continuous, positive,  submultiplicative  weight function on $\rd$, i.e., 
$ v(z_1+z_2)\leq v(z_1)v(z_2)$, for all $ z_1,z_2\in\Ren$.
We say that $w\in \mathcal{M}_v(\rd)$ if $w$ is a positive, continuous  weight function  on $\Ren$  {\it
	$v$-moderate}:
$ w(z_1+z_2)\leq Cv(z_1)w(z_2)$  for all $z_1,z_2\in\Ren$ (or for all $z_1,z_2\in \zd$).
We will mainly work with polynomial weights of the type
\begin{equation}\label{vs}
v_s(z)=\la z\ra^s =(1+|z|^2)^{s/2},\quad s\in\bR,\quad z\in\rd\,\, (\mbox{or}\, \zd).
\end{equation}
Moreover, we limit to weights $w$ with at most polynomial growth, that is there exist $C>0$ and $s>0$ such that
\begin{equation}\label{growth}
w(z)\leq C\la z\ra^s,\quad z\in\rd.
\end{equation}\par 
We shall work mostly with weights on $\rdd$ or $\zdd$; we define $(w_1\otimes w_2)\phas\coloneqq w_1(x)w_2(\o)$, for $w_1,w_2$ weights on $\rd$.\par

{\bf Spaces of sequences.} 	For $0<p\leq \infty$, $w\in \mathcal{M}_v(\zd)$, the space $\ell^{p}_w(\zd)$ consists of all sequences $a=(a_{k})_{k\in\zd}$ for which the (quasi-)norm 
	$$\|a\|_{\ell^{p}_w}=\left(\sum_{k\in\zd}|a_{k}|^p w(k)^p\right)^{\frac 1p}
	$$
	(with obvious modification for $p=\infty$) is finite.

We are going to use the following inclusion relations for $w(k)=\la k\ra^s$, $s\geq0$:
	 If $0<p_1,p_2 \leq\infty$, with 
	 $$s_2\leq s_1,\quad \frac1{p_2}+\frac{s_2}{d}<\frac1{p_1}+\frac{s_1}{d},$$
	 then \begin{equation}\label{inclusion}
	 \ell^{p_2}_{\la k\ra^{s_2}}(\zd)\hookrightarrow\ell^{p_1}_{\la k\ra^{s_1}}(\zd).
	 \end{equation}

The so-called translation and
modulation operators are defined by $T_x
g(y)=g(y-x)$ and $M_\o g(y)=e^{2\pi
	i\o y}g(y)$, respectively. Let $\cS(\rd)$ be the Schwartz class and consider $g\in\cS(\rd)$  a
non-zero window function. The
the  short-time Fourier
transform (STFT) $V_gf$ of a
function/tempered distribution $f$ in $\cS'(\rd)$ with
respect to the the window $g$ is defined by
\[
V_g f(x,\o)=\la f, M_{\o}T_xg\ra =\int e^{-2\pi i \o y}f(y)\overline{g(y-x)}\,dy,
\]
(i.e.,  the  Fourier transform $\cF$
applied to $f\overline{T_xg}$). \par


{\bf Modulation Spaces.} For $1\leq p,q\leq \infty$ such spaces were introduced  by H. Feichtinger in \cite{F1}, then extended to $0<p,q\leq\infty$ by Y.V. Galperin and S. Samarah in \cite{Galperin2004}. Their main properties and applications are now available in several textbooks, see for instance \cite{Elena-book}.
\begin{definition}\label{def2.4}
	Fix a non-zero window $g\in\cS(\rd)$, a weight $w\in\mathcal{M}_v(\rdd)$ and $0<p,q\leq \infty$. The modulation space $M^{p,q}_w(\rd)$ consists of all tempered distributions $f\in\cS'(\rd)$ such that the (quasi-)norm 
	\begin{equation}\label{norm-mod}
	\|f\|_{M^{p,q}_w}=\|V_gf\|_{L^{p,q}_w}=\left(\intrd\left(\intrd |V_g f \phas|^p w\phas^p dx  \right)^{\frac qp}d\o\right)^\frac1q 
	\end{equation}
	(obvious changes with $p=\infty$ or $q=\infty)$ is finite. 
\end{definition}
They  are quasi-Banach spaces (Banach spaces whenever $1\leq p,q\leq\infty$), whose (quasi-)norm does not depend on the window $g$, in the sense that different non-zero window functions in $\cS(\rd)$ yield equivalent (quasi-)norms. Moreover, if $1\leq p,q\leq\infty$, the window class $\cS(\rd)$ can be extended  to the modulation space  $M^{1,1}_v(\rd)$ (so-called Feichtinger algebra). 

To be short, we write $M^p_w(\rd)$ in place of $M^{p,p}_w(\rd)$ and $M^{p,q}(\rd)$ if $w\equiv 1$. 

We  recall the inversion formula for
the STFT: assume $g\in M^{1}_v(\rd)\setminus\{0\}$,
$f\in M^{p,q}_w(\rd)$, with $w\in\mathcal{M}_v(\rdd)$, then
\begin{equation}\label{invformula}
f=\frac1{\|g\|_2^2}\int_{\R^{2d}} V_g f(z) \pi (z)  g\, dz \, ,
\end{equation}
and the  equality holds in $M^{p,q}_w(\rd)$. The adjoint operator of $V_g$,  defined by
$$V_g^\ast F(t)=\intrdd F(z)  \pi (z) g dz \, ,
$$
maps the mixed-norm space $L^{p,q}_w(\rdd)$ into $M^{p,q}_w(\rd)$. In particular, if $F=V_g f$ the inversion formula \eqref{invformula} can be rephrased as
\begin{equation}\label{treduetre}
{\rm Id}_{M^{p,q}_w}=\frac 1 {\|g\|_2^2} V_g^\ast V_g.
\end{equation}

We need to introduce an alternative definition of modulation spaces  we shall use  in the sequel.
For $k\in \zd$, we denote by $\Q_k$ the unit closed cube centred at $k$.  The family $\{\Q_k\}_{k\in\zd}$ is a covering of $\rd$. We define $|\xi|_{\infty}:=\max_{i=1,\dots,d} |\xi_i|$, for $\xi\in\rd$. Consider now a smooth function $\rho:\rd \to [0,1]$ satisfying $\rho(\xi)=1$ for $|\xi|_{\infty}\leq 1/2$ and $\rho(\xi)=0$ for $|\xi|_{\infty}\geq 3/4$. Define 
\begin{equation}\label{C2rhotransl}
\rho_k(\xi)= T_k\rho (\xi)=\rho(\xi-k),\quad k\in\zd,
\end{equation}
that is,  $\rho_k$ is the translation of $\rho$ at $k$. By the assumption on $\rho$, we infer that $\rho_k(\xi)=1$ for $\xi \in \Q_k$ and 
$$\sum _{k\in\zd} \rho_k(\xi) \geq 1,\quad \forall\,\xi\in\rd. 
$$
Denote by 
\begin{equation}\label{C2sigmak}
\sigma_k(\xi)=\frac{\rho_k(\xi)}{\sum _{l\in\zd} \rho_l(\xi)},\quad \xi\in\rd,\, k\in\zd.
\end{equation}
Observe that $\sigma_k(\xi)=\sigma_0(\xi-k)\in \cD(\rd)$ and the sequence $\{\sigma_k\}_{k\in\zd}$ is a smooth partition of unity
$$\sum_{k\in\zd} \sigma_k(\xi)=1,\quad \forall \xi\in\rd.
$$ 
 For $k\in\zd$, we define the frequency-uniform decomposition operator by
	\begin{equation}\label{C2freq-unifdecop}
	\Box_k\coloneqq \cF^{-1} \sigma_k \cF.
	\end{equation}
The previous operators allow to introduce an alternative (quasi-)norm on the weighted modulation spaces $M_{h\otimes w}^{p,q}(\rd)$ inspired by \cite{baoxiang} as follows.
\begin{proposition}\label{C2altnorm}
 For $0< p,q\leq \infty$, $h,w\in\mathcal{M}_v(\rd)$ have	\begin{equation}\label{C2tildeMpq}
	\|f\|_{{M}^{p,q}_{h\otimes w}(\rd)}\asymp\left(\sum_{k\in\zd} \|\Box_k f\|^q_{L^p_h}w(k)^q\right)^\frac 1q,\quad f\in\cS'(\rd),
	\end{equation}
	with obvious modification for $q=\infty$.
\end{proposition}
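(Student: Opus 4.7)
The plan is to exploit the window-independence of the modulation quasi-norm by selecting the specific Schwartz window $g_0 := \cF^{-1}\sigma_0$, which is legitimate since $\sigma_0\in C_c^\infty(\rd)\subset\cS(\rd)$. A direct computation from the definition of the STFT gives
$$V_{g_0}f(x,\omega)=e^{-2\pi i x\omega}\int_{\rd}\hat f(\xi)\,\sigma_0(\xi-\omega)\,e^{2\pi i x\xi}\,d\xi.$$
Evaluating at $\omega=k\in\zd$ and using $\sigma_0(\xi-k)=\sigma_k(\xi)$ yields the pivotal identity
$$V_{g_0}f(x,k)=e^{-2\pi i xk}\,\Box_k f(x),\qquad k\in\zd,$$
so that $\|V_{g_0}f(\cdot,k)\|_{L^p_h}=\|\Box_k f\|_{L^p_h}$. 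Inserting the partition of unity $\sum_j\sigma_j\equiv 1$ under the integral and observing that $\supp\sigma_j\cap\supp T_\omega\sigma_0\ne\emptyset$ forces $|j-k|_\infty\le 2$ whenever $\omega\in Q_k$, one arrives at the finite-sum representation
$$V_{g_0}f(x,\omega)=e^{-2\pi i x\omega}\sum_{|j-k|_\infty\le 2}(\Box_j f\ast M_\omega g_0)(x),\qquad \omega\in Q_k,$$
which drives both directions.

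For the direction $\|f\|_{M^{p,q}_{h\otimes w}}\lesssim\bigl(\sum_k\|\Box_k f\|^q_{L^p_h}w(k)^q\bigr)^{1/q}$, I would apply $\|\cdot\|_{L^p_h}$ in $x$ to the finite-sum identity. For $1\le p\le\infty$, weighted Young's inequality combined with $|M_\omega g_0|=|g_0|\in L^1_v$ gives $\|\Box_jf\ast M_\omega g_0\|_{L^p_h}\lesssim\|\Box_jf\|_{L^p_h}$ uniformly in $\omega$; for $0<p<1$ the same bound persists by a Nikol'skii--Plancherel--Polya estimate, since $\Box_jf\ast M_\omega g_0$ has spectrum in a bounded set. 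Then, using $v$-moderateness to write $w(\omega)\asymp w(k)$ on $Q_k$, integrating over $\omega\in Q_k$, and summing in $k$ via the $\ell^q$ (quasi-)triangle inequality (with $|a+b|^q\le |a|^q+|b|^q$ when $q<1$) closes the estimate, the $O(1)$ cardinality of the shift set being harmless.

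For the reverse direction, the identity $\|\Box_k f\|_{L^p_h}=\|V_{g_0}f(\cdot,k)\|_{L^p_h}$ reduces matters to a sampling-type bound
$$\sum_k\|V_{g_0}f(\cdot,k)\|^q_{L^p_h}w(k)^q\lesssim\int_{\rd}\|V_{g_0}f(\cdot,\omega)\|^q_{L^p_h}w(\omega)^q\,d\omega.$$
I would pick an auxiliary $\tilde g\in\cS(\rd)$ with $\widehat{\tilde g}\in C_c^\infty(\rd)$ equal to $1$ on $\supp\sigma_0$, so that $V_{g_0}f(x,\cdot)$ equals its own convolution in $\omega$ against a fixed Schwartz kernel $\psi$; this spectral reproducing formula lets one dominate the pointwise value $\|V_{g_0}f(\cdot,k)\|_{L^p_h}$ by a Peetre-type maximal average of $\|V_{g_0}f(\cdot,\omega)\|_{L^p_h}$ over $\omega$ in a neighbourhood of $k$, after which integration in $\omega$ and summation in $k$ close the argument. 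The principal obstacle is carrying out this last step in the quasi-Banach regime $p,q<1$, where duality and Minkowski's inequality in the $x$-variable are unavailable; the remedy leans on the band-limitedness of the relevant pieces, together with the $q$-subadditivity $(a+b)^q\le a^q+b^q$, to push the estimate through.
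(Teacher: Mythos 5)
Your forward inequality is sound and is, in substance, the paper's own argument: the identity $V_{g_0}f(x,k)=e^{-2\pi i xk}\,\Box_k f(x)$ for $g_0=\cF^{-1}\sigma_0$, the finite overlap of $\supp\sigma_j$ with $\supp T_\omega\sigma_0$ for $\omega\in \Q_k$, and the Young inequality for distributions with compact spectrum (the Nikol'skii-type substitute for Young when $0<p\leq 1$, which is exactly the ingredient the paper imports from Kobayashi) give $\|V_{g_0}f(\cdot,\omega)\|_{L^p_h}\lesssim\sum_{|j-k|_\infty\le2}\|\Box_j f\|_{L^p_h}$ uniformly for $\omega\in\Q_k$; the rest is bookkeeping with $w(\omega)\asymp w(k)$ and $q$-subadditivity.

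The reverse direction, however, rests on a claim that is false as stated: $V_{g_0}f(x,\cdot)$ is \emph{not} band-limited in $\omega$ for fixed $x$. Indeed $V_{g_0}f(x,\cdot)=\cF\bigl(f\,\overline{T_xg_0}\bigr)$, so its spectrum in the $\omega$-variable is (a reflection of) $\supp\bigl(f\,\overline{T_xg_0}\bigr)$, which is noncompact for generic $f$ because $g_0\in\cS(\rd)$ has full support. The condition $\widehat{\tilde g}\equiv1$ on $\supp\sigma_0$ constrains the frequency variable $\xi$ of $f$, not the $\omega$-slot of the STFT, so there is no reproducing identity $V_{g_0}f(x,\cdot)=V_{g_0}f(x,\cdot)\ast\psi$ in $\omega$, and the Peetre-type maximal average over $\omega$ cannot be extracted this way. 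The repair is to point your own tool at the $x$-variable instead: choosing $\tilde\sigma\in C^\infty_c(\rd)$ equal to $1$ on a cube large enough that $T_\eta\tilde\sigma\equiv1$ on $\supp\sigma_k$ for every $\eta\in\Q_k$, one gets $\Box_k f=\cF^{-1}\sigma_k\ast\cF^{-1}(T_\eta\tilde\sigma\,\hat f)$, and the same band-limited Young inequality in $x$ yields $\|\Box_k f\|_{L^p_h}\lesssim\|\cF^{-1}\sigma_k\|_{L^p_v}\,\|V_{\tilde g}f(\cdot,\eta)\|_{L^p_h}$ uniformly for $\eta\in\Q_k$; integrating over $\eta\in\Q_k$ against $w(\eta)^q\asymp w(k)^q$ and summing in $k$ gives the sampling bound, with window-independence of the modulation (quasi-)norm closing the loop. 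This is precisely the modification the paper itself records, namely the factorization $\Box_k f=\cF^{-1}\sigma_k T_\xi\bar{\hat\phi}\,\cF f$ for $\xi\in\Q_k$. So your architecture and forward half are correct, but the reverse half needs this repair before it is a proof.
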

\begin{proof}
	The case $p,q\geq 1$ is well known, see for example \cite[Proposition 2.3.25]{Elena-book}. The cases $0<p<1$ or $0<q<1$ are an easy modification of that proof. Namely, let us point out the main changes. If $0<p\leq1$, we consider
	$$\Box_k f=\cF^{-1}\sigma_k \cF f = \cF^{-1}\sigma_k T_\xi \bar{\hat{\phi}}\cF f,\quad \mbox{for}\quad \xi \in \Q_k,
	$$
	since $T_\xi \bar{\hat{\phi}}=1$ in supp $\sigma_k$ for $\xi \in \Q_k$.
	Using Young's inequality for distributions compactly supported in the frequencies (see \cite[Lemma 2.6]{Kobayashi2006}, which holds also for $L^p_h$, $0<p\leq 1$, with $h$ being $v$-moderate), for $\xi\in \Q_k$,  we obtain
	$$\|\Box_k f\|_{L^p_h}\lesssim \|\cF^{-1}\sigma_k\|_{L^p_v}\|\cF^{-1}T_\xi \bar{\hat{\phi}}\cF f\|_{L^p_h}\lesssim \|\cF^{-1}T_\xi \bar{\hat{\phi}}\cF f\|_{L^p_h}.
	$$
	The rest of the proof is analogous to the Banach case and we leave the details to the interested reader.
\end{proof}

An useful embedding is contained in what  follows. 
\begin{proposition}\label{C2inclweightmod} Given $0< p_1,p_2,q_1,q_2\leq\infty$,  with $m,s_1,s_2$ in $\bR$,   one has
	\begin{equation}\label{C2inclmodweight}
	M^{p_1,q_1}_{\la\cdot\ra^m\otimes \la\cdot\ra^{s_1}}(\rd)\hookrightarrow 	M^{p_2,q_2}_{\la\cdot\ra^m\otimes\la\cdot\ra^{s_2}}(\rd)
	\end{equation}
	if and only if
	\begin{equation}\label{C2inclmodweightindicicesp}
	p_1\leq p_2
	\end{equation}
	and 
	\begin{equation}\label{C2inclmodweightindicicesq}
	q_1\leq q_2, \,\,s_1\geq s_2 \quad \quad\mbox{or}\quad \quad 
	q_1>q_2,\quad  \frac{s_1}{d}+\frac{1}{q_1}> \frac{s_2}{d}+\frac{1}{q_2}.
	\end{equation}
\end{proposition}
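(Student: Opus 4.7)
The strategy is to reduce the embedding between modulation spaces, via the $\Box_k$-decomposition of Proposition \ref{C2altnorm}, to an embedding at the level of weighted sequence-valued $L^p$-spaces. That proposition furnishes, for $j=1,2$, the equivalence
\[
\|f\|_{M^{p_j,q_j}_{\la\cdot\ra^m\otimes\la\cdot\ra^{s_j}}}\asymp \Big(\sum_{k\in\zd}\|\Box_k f\|_{L^{p_j}_{\la\cdot\ra^m}}^{q_j}\la k\ra^{s_j q_j}\Big)^{1/q_j},
\]
so that the embedding \eqref{C2inclmodweight} is equivalent to controlling the $(p_2,q_2,s_2)$-quantity by the $(p_1,q_1,s_1)$-quantity uniformly in $f$. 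Since the spatial weight $\la\cdot\ra^m$ coincides on both sides and every $\Box_k f$ is band-limited with Fourier support in a fixed compact neighborhood of $k$, the question splits into a Bernstein--Nikol'ski{\u\i} step in the spatial variable and a weighted-$\ell^q$ step in the $k$-variable.

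\emph{Sufficiency.} Write $\Box_k f=(\Box_k f)\ast \cF^{-1}(T_k\tilde\sigma)$ with $\tilde\sigma\in\cD(\rd)$ identically $1$ on $\supp \sigma_0$, and invoke the weighted Young inequality for distributions with compact frequency support (exactly as in the proof of Proposition \ref{C2altnorm}) to obtain, for $p_1\leq p_2$,
\[
\|\Box_k f\|_{L^{p_2}_{\la\cdot\ra^m}}\lesssim \|\Box_k f\|_{L^{p_1}_{\la\cdot\ra^m}}
\]
uniformly in $k$. The embedding thus reduces to $\ell^{q_1}_{\la\cdot\ra^{s_1}}(\zd)\hookrightarrow \ell^{q_2}_{\la\cdot\ra^{s_2}}(\zd)$. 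If $q_1\leq q_2$ and $s_1\geq s_2$, this is immediate by chaining $\ell^{q_1}\hookrightarrow \ell^{q_2}$ with the pointwise comparison of weights. If $q_1>q_2$, factor $a\la\cdot\ra^{s_2}=(a\la\cdot\ra^{s_1})\la\cdot\ra^{s_2-s_1}$ and apply H\"older with $1/q_2=1/q_1+1/r$: the estimate holds iff $\sum_k\la k\ra^{(s_2-s_1)r}<\infty$, i.e.\ iff $(s_1-s_2)r>d$, which is exactly the strict inequality $s_1/d+1/q_1>s_2/d+1/q_2$.

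\emph{Necessity.} To force $p_1\leq p_2$, pick $f$ with $\hat f\in\cD(\rd)$ supported in a small ball about $0$, so that $\|f\|_{M^{p,q}_{\la\cdot\ra^m\otimes\la\cdot\ra^s}}\asymp\|f\|_{L^p_{\la\cdot\ra^m}}$; a series $f=\sum_j c_j\,\mathrm{sinc}(\cdot-\lambda_j)$ of well-separated band-limited translates with $(c_j)\in\ell^{p_1}\setminus\ell^{p_2}$ furnishes a band-limited function in $L^{p_1}_{\la\cdot\ra^m}\setminus L^{p_2}_{\la\cdot\ra^m}$ whenever $p_1>p_2$. For the $(q,s)$-condition, fix $\phi\in\cS(\rd)$ with $\hat\phi$ compactly supported near $0$ and test on $f=\sum_{k\in\zd}c_k M_k\phi$: then $\Box_k f\asymp c_k M_k\phi$ yields $\|f\|_{M^{p,q}_{\la\cdot\ra^m\otimes\la\cdot\ra^s}}\asymp \|\phi\|_{L^p_{\la\cdot\ra^m}}\|c\|_{\ell^q_{\la\cdot\ra^s}}$. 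Hence the hypothesis forces $\ell^{q_1}_{\la\cdot\ra^{s_1}}\hookrightarrow \ell^{q_2}_{\la\cdot\ra^{s_2}}$; single-spike sequences $c_k=\delta_{k,n}$, $|n|\to\infty$, give $s_1\geq s_2$ whenever $q_1\leq q_2$; when $q_1>q_2$, pure-power sequences $c_k=\la k\ra^{-\gamma}$ give the weak inequality $s_1/d+1/q_1\geq s_2/d+1/q_2$, and the borderline equality case is ruled out by the logarithmic test $c_k=\la k\ra^{-s_1-d/q_1}(\log\la k\ra)^{-\beta}$ with $\beta\in(1/q_1,1/q_2]$.

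\emph{Expected obstacle.} The delicate step is keeping the Bernstein--Nikol'ski{\u\i} estimate uniform in $k$ throughout the quasi-Banach range $0<p_1\leq 1$, which forces reliance on Young's inequality tailored to compactly-frequency-supported distributions rather than its classical Banach form; once this uniform-in-$k$ bound is secured, the rest is a routine combination of weighted-sequence H\"older inequalities with explicit test-function constructions.
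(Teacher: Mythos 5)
Your argument is correct in substance and follows essentially the same route as the paper's proof: both pass to the frequency-uniform decomposition norm of Proposition \ref{C2altnorm} and reduce the embedding to (a) a uniform-in-$k$ Nikol'ski\u{\i}-type inequality for the band-limited blocks $\Box_k f$ (equivalently, the local inclusion of $\cF L^{p_1}$ in $\cF L^{p_2}$ precisely when $p_1\le p_2$) and (b) the characterization of the weighted inclusion $\ell^{q_1}_{\la\cdot\ra^{s_1}}(\zd)\subset\ell^{q_2}_{\la\cdot\ra^{s_2}}(\zd)$. The only real difference is that the paper cites these two ingredients (to \cite{Bloom1982,Fournier1973,Kobayashi2006,triebel2010theory} and to \cite[Lemma 2.10]{Guo-incl-mod-Triebel2017}, respectively) whereas you prove them; your treatment of (b) -- H\"older with $1/q_2=1/q_1+1/r$ for sufficiency, and spikes, power sequences, and the logarithmic borderline test for necessity -- is complete and correct, as is the modulated-bump construction $f=\sum_k c_kM_k\phi$ reducing the $(q,s)$-necessity to (b).

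One step as written would fail and needs a small repair: in the necessity of $p_1\le p_2$ you build a counterexample from $\sum_j c_j\,\mathrm{sinc}(\cdot-\lambda_j)$, but for $0<p\le 1$ the function $\mathrm{sinc}$ does not belong to $L^p(\rd)$ (it decays only like a product of reciprocals of the coordinates), so in the quasi-Banach range the individual terms are not even in $L^{p_1}_{\la\cdot\ra^m}$. Replace $\mathrm{sinc}$ by a fixed $\phi\in\cS(\rd)$ with $\hat\phi$ compactly supported near the origin; for well-separated $\lambda_j$ the rapid decay of $\phi$ still gives $\bigl\|\sum_j c_j\phi(\cdot-\lambda_j)\bigr\|_{L^p_{\la\cdot\ra^m}}\asymp\|(c_j\la\lambda_j\ra^m)_j\|_{\ell^p}$ for all $0<p\le\infty$, and choosing $(c_j\la\lambda_j\ra^m)_j\in\ell^{p_1}\setminus\ell^{p_2}$ completes the argument. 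Note also that the weight $\la\cdot\ra^m$ must be carried through this choice of coefficients, as indicated.
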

\begin{proof}
	The Banach case when $m=0$ was originally shown by H. Feichtinger in \cite{F1}. We use similar arguments as in that proof.  The discrete modulation norm defined in \eqref{C2tildeMpq} is given by
	$$\|f\|_{\mpq_{\la\cdot\ra^m\otimes \la\cdot\ra^s}}\asymp \left(\sum_{k\in\zd}\|\square_k f\|_{L^p_{\la\cdot\ra^m}}^q\la k\ra ^{sq}\right)^\frac1q.
	$$
	The necessity of  \eqref{C2inclmodweightindicicesp} follows from the fact that $\cF L^{p_1}$ is locally contained in $\cF L^{p_2}$ if and only if $p_1\leq p_2$ (with strict inclusion if $p_1< p_2$), cf. \cite{Bloom1982,Fournier1973,Kobayashi2006,triebel2010theory}. The  set of conditions in \eqref{C2inclmodweightindicicesq} in turn describes the inclusions between weighted
	$\ell^q$ spaces: $\ell^{q_1}_{\la\cdot\ra^{s_1}}\subset \ell^{q_2}_{\la\cdot\ra^{s_2}}$
	if and only if the indices' relations in \eqref{C2inclmodweightindicicesq} are satisfied, cf. for instance \cite[Lemma 2.10]{Guo-incl-mod-Triebel2017}. This concludes the proof.
\end{proof}



We also recall the following inclusion relations, see e.g. \cite[Theorem 2.4.17]{Elena-book} or \cite[Theorem 3.4]{Galperin2004}: If $p_{1}\leq p_{2}$, $q_{1}\le q_{2}$ and $w_{2}\lesssim w_{1}$,
then 
\begin{equation}\label{inclmod}
M_{w_{1}}^{p_{1},q_{1}}(\rd)\hookrightarrow M_{w_{2}}^{p_{2},q_{2}}(\rd). 
\end{equation}
\begin{corollary}\label{inclquasi-Banachq}
	For $0<q_1\leq q_2\leq\infty$, $d\in\bN_+$, $m,s,r\in\bR$,  $r>s+d(1/q_1-1/q_2)$, we have the following continuous embeddings:\begin{equation}\label{modonquasiBan}
	M^{\infty,q_1}_{\la\cdot\ra^m\otimes \la\cdot\ra^r}(\rd)\hookrightarrow M^{\infty,q_2}_{\la\cdot\ra^m\otimes \la\cdot\ra^r}(\rd)\hookrightarrow M^{\infty,q_1}_{\la\cdot\ra^m\otimes \la\cdot\ra^s}(\rd).
	\end{equation}
\end{corollary}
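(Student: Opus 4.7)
The plan is to derive both embeddings as immediate corollaries of Proposition \ref{C2inclweightmod}, since both are inclusions between modulation spaces of the special form $M^{\infty,q}_{\la\cdot\ra^m\otimes \la\cdot\ra^{\bullet}}(\rd)$, and the proposition characterizes exactly when such inclusions hold. Since $p_1=p_2=\infty$, the condition \eqref{C2inclmodweightindicicesp} is automatically satisfied, so for each inclusion it only remains to verify the dichotomy \eqref{C2inclmodweightindicicesq} on the second-variable indices and weights.

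For the first embedding $M^{\infty,q_1}_{\la\cdot\ra^m\otimes \la\cdot\ra^r}(\rd)\hookrightarrow M^{\infty,q_2}_{\la\cdot\ra^m\otimes \la\cdot\ra^r}(\rd)$ I would take the source exponent $q_1$ with weight exponent $r$ and target exponent $q_2$ with weight exponent $r$. Since by hypothesis $q_1\leq q_2$ and trivially $r\geq r$, the first alternative in \eqref{C2inclmodweightindicicesq} applies and Proposition \ref{C2inclweightmod} yields the embedding.

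For the second embedding $M^{\infty,q_2}_{\la\cdot\ra^m\otimes \la\cdot\ra^r}(\rd)\hookrightarrow M^{\infty,q_1}_{\la\cdot\ra^m\otimes \la\cdot\ra^s}(\rd)$ the source exponent is now $q_2$ with weight $r$ and the target is $q_1$ with weight $s$, so in the notation of Proposition \ref{C2inclweightmod} we have $(q_1,s_1)=(q_2,r)$ and $(q_2,s_2)=(q_1,s)$. If $q_1=q_2$, the hypothesis $r>s+d(1/q_1-1/q_2)=s$ gives $r\geq s$, which is the first alternative in \eqref{C2inclmodweightindicicesq}. If $q_1<q_2$, then $q_2>q_1$ (the second alternative), and we must verify $r/d+1/q_2>s/d+1/q_1$; rearranging this is exactly $r>s+d(1/q_1-1/q_2)$, which is our assumption.

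There is essentially no obstacle: the proof reduces to a bookkeeping check that the given threshold $r>s+d(1/q_1-1/q_2)$ is precisely the sharp strict-inequality condition from \eqref{C2inclmodweightindicicesq} when one passes from the larger index $q_2$ back down to the smaller index $q_1$. The only point requiring a moment of care is recognising that the strict inequality in the hypothesis covers the borderline case $q_1=q_2$ by forcing $r>s$, so that both alternatives of \eqref{C2inclmodweightindicicesq} are uniformly captured.
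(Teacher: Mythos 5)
Your proof is correct and follows essentially the same route as the paper: the only nontrivial part, the second embedding, is obtained from Proposition \ref{C2inclweightmod} by exactly the index bookkeeping you carry out, including the split between $q_1=q_2$ and $q_1<q_2$. The paper simply invokes the coarser inclusion \eqref{inclmod} for the first embedding instead of the first alternative of \eqref{C2inclmodweightindicicesq}, which amounts to the same thing in this setting.
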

\begin{proof}
The first embedding  is a straightforward application of the inclusion relations in \eqref{inclmod}.  The second one follows by the embedding in Proposition \ref{C2inclweightmod}.
\end{proof}

{\bf Besov Spaces.}  The Besov spaces are denoted by $B^{p,q}_s(\rd)$, $0<p,q\leq\infty$, $s\in\bR$, and defined as follows. Suppose that $\psi_0,\psi\in\cS(\rd)$ satisfy $\supp \,\psi_0\subset \{\o\in\rd:\,|\o|\leq 2\}$,  $\supp \,\psi\subset \{\o\in\rd:\,1/2\leq |\o|\leq 2\}$ and  $\psi_0(\o)+\sum_{j=1}^\infty \psi(2^{-j}\o)=1$ for every $\o\in\rd$. Set $\psi_j(\o):=\psi(2^{-j}\o)$, $\o\in\rd$. Then the Besov space $B^{p,q}_s(\rd)$ consists of all tempered distributions $f\in\cS'(\rd)$ such that the (quasi-)norm
\begin{equation}\label{norm-besov}
\|f\|_{B^{p,q}_s}=\left(\sum_{j=0}^\infty 2^{jsq} \|\cF^{-1}(\psi_j \cF f)\|_{p}^q\right)^{1/q}<\infty
\end{equation}
(with usual modifications when $q=\infty$).
Besov spaces are generalizations of both H\"{o}lder-Zygmund and Sobolev spaces, see e.g. \cite{triebel2010theory}. Precisely, we recapture the Sobolev spaces when $p=q=2$, $s\in\bR$: $B^{2,2}_s(\rd)=H^s(\rd)$. For $s>0$, $B^{\infty,\infty}_s(\rd)=\cC^s(\rd)$, the H\"{o}lder-Zygmund classes, whose definition is as follows. For $s>0$, we can write $s=n+\eps$, with $n\in\bN$ and $\eps<1$. Then $\cC^s(\rd)$ is the space of functions $f\in\cC^n(\rd)$ such that  for each multi-index $\a\in\bN^d$, with $|\a|=n$, the derivative $\partial^\a f$ satisfies the  H\"{o}lder condition
$|\partial^\a f(x)-\partial^\a f(y)|\leq K |x-y|^\eps$, for a suitable $K>0$.

Inclusion relations between modulation and Besov spaces $B^{\infty,q}_{s}$ were first obtained for $1\leq q\leq\infty$ (the Banach setting) in \cite[Theorem 2.10]{Toftweight2004} and then for $0<q\leq\infty$ in \cite{baoxiang}:   for $0<q\leq\infty$, set $\theta(q)=\min\{0, 1/q-1\}$, then 
\begin{equation}\label{besovmodone}
B^{\infty,q}_{s+d/q}(\rd)\hookrightarrow M^{\infty,q}_{1\otimes \la\cdot\ra^s}(\rd)\hookrightarrow B^{\infty,q}_{s+d\theta(q)}(\rd), \quad s\in\bR.
\end{equation}

\subsection{Gabor analysis of $\tau$-pseudodifferential operators} 	
For any fixed $m\in\bR$, the class $S^m(\rdd)$ in \eqref{Sm00} is a Fr\'echet space when endowed with the sequence of norms $\{|\cdot|_{N,m}\}_{N\in\bN}$,
\begin{equation}\label{semihormander}
|\sigma|_{N,m}:=\sup_{|\a|\leq N} \sup_{z\in\rdd}|\partial^\alpha\sigma(z)|\la z\ra^{-m},\quad N\in\bN. 
\end{equation}
For $n\in\bN$, $m\in\bR\setminus\{0\}$, we define by $\cC_m^n(\rdd)$ the space of functions having $n$ derivatives and satisfying \eqref{semihormander} for $N=n$, whereas $\cC^n(\rdd)$ is the space of functions with $n$ bounded derivatives.
Clearly we have the equalities
$$ S^m(\rdd)=\bigcap_{n\geq 0} \cC_m^n(\rdd), \,m\in\bR\setminus\{0\},\quad S^0(\rdd)=\bigcap_{n\geq 0} \cC^n(\rdd).
$$

A characterization of the class $S^0(\rdd)=S^0_{0,0}(\rdd)$ with modulation spaces was announced by Toft in \cite[Remark 3.1]{toft2007} and proved in \cite[Lemma 6.1]{GR}.
\begin{lemma}\label{lemma2.1} We have the equalities
	\begin{equation}
		\bigcap_{n\geq 0} \cC^n(\rd)=\bigcap_{s\geq 0} M^\infty_{1\otimes \la \cdot\ra^s}(\rd)=\bigcap_{s\geq 0} M^{\infty,1}_{1\otimes \la \cdot\ra^s}(\rd). 
	\end{equation}
	Hence $S^0(\rdd)=\bigcap_{s\geq 0} M^\infty_{1\otimes \la \cdot\ra^s}(\rdd)=\bigcap_{s\geq 0} M^{\infty,1}_{1\otimes \la \cdot\ra^s}(\rdd)$. 
\end{lemma}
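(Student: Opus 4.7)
The plan is to chain the Besov--modulation embeddings \eqref{besovmodone} with the modulation-space inclusions of Proposition \ref{C2inclweightmod}, collapsing the three intersections to a common space; the statement on $\rdd$ will then follow by replacing $d$ with $2d$ and invoking $S^0(\rdd)=\bigcap_{n\geq 0}\cC^n(\rdd)$, which is immediate from the definitions.

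I would first handle the rightmost equality $\bigcap_{s\geq 0} M^\infty_{1\otimes \la \cdot\ra^s}(\rd)=\bigcap_{s\geq 0} M^{\infty,1}_{1\otimes \la \cdot\ra^s}(\rd)$ entirely within the modulation-space framework. The inclusion $\supseteq$ is immediate from \eqref{inclmod}. For the reverse I would apply Proposition \ref{C2inclweightmod} with $p_1=p_2=\infty$, $q_1=\infty > q_2=1$, $s_1=s+d+1$, $s_2=s$: the condition $s_1/d+1/q_1>s_2/d+1/q_2$ reduces to $1/d>0$, so $M^\infty_{1\otimes \la\cdot\ra^{s+d+1}}(\rd)\hookrightarrow M^{\infty,1}_{1\otimes \la\cdot\ra^s}(\rd)$; intersecting over $s\geq 0$ yields the inclusion.

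For the leftmost equality I would invoke \eqref{besovmodone} with $q=\infty$, so that $d/q=0$ and $\theta(\infty)=-1$, together with the identification $B^{\infty,\infty}_s(\rd)=\cC^s(\rd)$ (H\"older--Zygmund) for $s>0$, which gives
$$\cC^s(\rd)\hookrightarrow M^\infty_{1\otimes \la\cdot\ra^s}(\rd)\hookrightarrow \cC^{s-d}(\rd), \qquad s>d.$$
Intersecting over $s\geq 0$ identifies $\bigcap_{s\geq 0} M^\infty_{1\otimes \la\cdot\ra^s}(\rd)$ with $\bigcap_{s\geq 0}\cC^s(\rd)$. It remains to identify this H\"older--Zygmund intersection with $\bigcap_{n\geq 0}\cC^n(\rd)$ (functions with $n$ bounded derivatives): for non-integer $s\in (n,n+1)$ any function in the H\"older--Zygmund $\cC^s$ has at least $n$ bounded derivatives, while any function with $n+1$ bounded derivatives lies in the H\"older--Zygmund $\cC^s$ for every $s<n+1$, so the two intersections coincide.

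The main subtlety is bookkeeping the loss of $-d$ derivatives in the upper embedding of \eqref{besovmodone} and the loss of $d+1$ frequency weight in Proposition \ref{C2inclweightmod}; both are absorbed once $s$ runs over all non-negative reals. The reconciliation between H\"older and Zygmund classes at integer order is routine and has no effect after the final intersections.
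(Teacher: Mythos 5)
Your argument is correct, but it is not the paper's route: the paper states Lemma \ref{lemma2.1} without proof, citing Toft and \cite[Lemma 6.1]{GR}, and later recovers it as the case $m=0$ of Lemma \ref{charsm00}, whose proof rests on the identity $S^m=\bigcap_n M^{\infty,1}_{\la\cdot\ra^{-m}\otimes\la\cdot\ra^n}$ quoted from \cite{HolstToftWahlberg2007}, the embeddings \eqref{modonquasiBan}, and a direct short-time Fourier transform estimate (integration by parts via \eqref{evgf}, Peetre's inequality and Leibniz' rule) yielding \eqref{e1}. You instead run the Besov--modulation embeddings \eqref{besovmodone} at $q=\infty$ together with $B^{\infty,\infty}_s=\cC^s$ and the elementary reconciliation of the H\"older--Zygmund classes with $\bigcap_{n\geq 0}\cC^n$ at non-integer $s$, plus Proposition \ref{C2inclweightmod} for the $M^\infty$ versus $M^{\infty,1}$ equality. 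In effect you prove (the $q=\infty$ part of) Lemma \ref{Besov} first and read Lemma \ref{lemma2.1} off it, whereas the paper goes the other way: Lemma \ref{Besov} is deduced from Lemma \ref{charsm00} and \eqref{besovmodone}. Your route is more self-contained at the level of this particular lemma, needing only the quoted embeddings rather than an external characterization; its limitation is that \eqref{besovmodone} is unweighted in the first variable, so the argument does not extend to $S^m$ with $m\neq 0$, while the paper's STFT computation in Lemma \ref{charsm00} handles the weight $\la\cdot\ra^{-m}$ uniformly in one stroke. The index bookkeeping you perform (the loss of $d$ derivatives in the right-hand embedding of \eqref{besovmodone}, the condition $s_1>s_2+d(1/q_2-1/q_1)$ in Proposition \ref{C2inclweightmod}, and the bounded-plus-Lipschitz-implies-H\"older step) all checks out and is indeed absorbed by intersecting over all $s\geq 0$.
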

In what follows we extend the previous outcome to all the classes $S^m(\rdd)$, $m\in\bR$. 
\begin{lemma}\label{charsm00} For $m\in \bR$,  $0<q\leq\infty$, $n\in\bN$, $s\in(0,+\infty)$, we have the equalities of  Fr\'echet spaces
	\begin{equation}\label{e2}
	S^m(\rdd)=	\bigcap_{n\geq 0} \cC^n_m(\rdd)=\bigcap_{n\geq 0} M^{\infty,q}_{\la \cdot\ra^{-m}\otimes \la \cdot\ra^n}(\rdd)=\bigcap_{s\geq 0} M^{\infty,q}_{\la \cdot\ra^{-m}\otimes \la \cdot\ra^s}(\rd)
	\end{equation} with  equivalent families of  (quasi-)norms
\begin{equation}\label{seqnorm}
	\{|\cdot|_{n,m}\}_{n\in\bN},\quad \{\|\cdot\|_{M^{\infty,q}_{\la \cdot\ra^{-m}\otimes\la\cdot\ra^n}}\}_{n\in\bN}, \quad \{\|\cdot\|_{M^{\infty,q}_{\la \cdot\ra^{-m}\otimes\la\cdot\ra^s}}\}_{s\geq 0}.
	\end{equation}  
	In particular, for every $n\in\bN$,
	\begin{equation}\label{e1}
	\|f\|_{M^\infty_{\la\cdot\ra^{-m}\otimes\la \cdot\ra^n}}\leq C(n,m) |f|_{n,m}. 
	\end{equation}
\end{lemma}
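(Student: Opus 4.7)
The first equality $S^m(\rdd)=\bigcap_{n\geq 0}\cC^n_m(\rdd)$ is tautological from \eqref{Sm00} and the definition of $\cC^n_m$. For the other identifications, I would exploit that different values of $q$ produce the same intersection: by Corollary \ref{inclquasi-Banachq}, enlarging $q$ while simultaneously decreasing the exponent $s$ by any fixed amount $> d(1/q_1-1/q_2)$ yields a continuous embedding, and taking the intersection over \emph{all} $s\geq 0$ washes out this shift. Hence it suffices to prove the core equality $S^m(\rdd)=\bigcap_{s\geq 0} M^\infty_{\la\cdot\ra^{-m}\otimes\la\cdot\ra^s}(\rdd)$ (with $q=\infty$), since all other $q$'s give the same intersection. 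The third equality (intersection over $n\in\bN$ vs. $s\geq 0$) is then immediate from monotonicity of the weights.

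\textbf{Direction $S^m\subset M^\infty_{\la\cdot\ra^{-m}\otimes\la\cdot\ra^n}$ and inequality \eqref{e1}.} Fix a Schwartz window $\Phi\in\cS(\rdd)\setminus\{0\}$ and write $V_\Phi\sigma(z,\zeta)=\cF[\sigma\,T_z\overline{\Phi}](\zeta)$. For any multi-index $\a$ with $|\a|\leq N$, integrate by parts to move $\zeta^\a$ onto derivatives of $\sigma\cdot T_z\overline{\Phi}$, and expand with Leibniz. The resulting sum involves terms $(\partial^\beta\sigma)(w)\,(\partial^{\a-\beta}\overline\Phi)(w-z)$; using $|\partial^\beta\sigma(w)|\leq |\sigma|_{N,m}\la w\ra^m$ together with the submultiplicative estimate $\la w\ra^m\lesssim \la z\ra^m\la w-z\ra^{|m|}$ and the Schwartz decay of $\Phi$, one obtains $|\zeta^\a V_\Phi\sigma(z,\zeta)|\leq C_\a |\sigma|_{N,m}\la z\ra^m$ uniformly in $(z,\zeta)$. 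Maximizing over $|\a|\leq N$ gives the pointwise bound $|V_\Phi\sigma(z,\zeta)|\leq C_N|\sigma|_{N,m}\la z\ra^m\la\zeta\ra^{-N}$, which is exactly \eqref{e1} (with a tracked constant depending only on $N,m,\Phi$). In particular $\sigma\in M^\infty_{\la\cdot\ra^{-m}\otimes\la\cdot\ra^N}$ for every $N$, and via the embeddings \eqref{inclmod}--\eqref{modonquasiBan} also in $M^{\infty,q}_{\la\cdot\ra^{-m}\otimes\la\cdot\ra^s}$ for every $s\geq 0$ and $q$.

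\textbf{Reverse direction.} Assume $\sigma\in\bigcap_{s\geq 0}M^\infty_{\la\cdot\ra^{-m}\otimes\la\cdot\ra^s}(\rdd)$. Fix $g\in\cS(\rdd)\setminus\{0\}$ and apply the inversion formula \eqref{invformula} to get $\sigma=\|g\|_2^{-2}\int V_g\sigma(z,\zeta)\,\pi(z,\zeta)g\,dz\,d\zeta$ in $\cS'(\rdd)$. Differentiating under the integral and expanding $\partial^\a[e^{2\pi i\zeta w}g(w-z)]$ via Leibniz produces a finite sum of terms of the form $\zeta^\beta\,(\partial^{\a-\beta}g)(w-z)\,e^{2\pi i\zeta w}$. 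Using the pointwise bound $|V_g\sigma(z,\zeta)|\leq C_s\la z\ra^m\la\zeta\ra^{-s}$ available for all $s$, choosing $s$ large enough to absorb $|\zeta|^{|\a|}$ and give absolute $\zeta$-integrability, and then invoking Schwartz decay of $\partial^{\a-\beta}g$ together with $\la z\ra^m\leq \la w\ra^m\la w-z\ra^{|m|}$ to handle the $z$-integral, one concludes $|\partial^\a\sigma(w)|\leq C_\a\la w\ra^m$, so $\sigma\in S^m(\rdd)$, with the constant $C_\a$ controlled by a finite number of the modulation seminorms of $\sigma$. The two-sided bounds then imply equivalence of all three families of quasi-norms listed in \eqref{seqnorm}.

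\textbf{Main obstacle.} The only real subtlety is the quasi-Banach regime $q<1$: one must check that the inclusions of Corollary \ref{inclquasi-Banachq} still reduce everything to $q=\infty$, and that the integral expansions in the reverse direction remain absolutely convergent. Since all estimates above are pointwise on the STFT and use the (stronger) $L^\infty$-weighted control, the quasi-Banach case causes no essential new difficulty—merely a careful check that Minkowski/Young substitutes (e.g.\ the ones cited in Proposition \ref{C2altnorm}) are available in $L^p$ for $p<1$ when needed.
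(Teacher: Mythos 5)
Your proposal is correct, and it overlaps with the paper's argument in two of its three ingredients but replaces the third by a self-contained computation. Like the paper, you reduce the dependence on $q$ via the two-sided embeddings of Corollary \ref{inclquasi-Banachq} (the paper invokes \eqref{modonquasiBan} for exactly this purpose), and your forward direction is essentially the paper's proof of \eqref{e1}: the identity $\cF(\partial^\a(\sigma\,T_z\overline{\Phi}))(\zeta)=(2\pi i\zeta)^\a V_\Phi\sigma(z,\zeta)$, Leibniz, Peetre's inequality $\la w\ra^m\lesssim\la z\ra^m\la w-z\ra^{|m|}$, and $\cF\colon L^1\to\cC_0$ give $|V_\Phi\sigma(z,\zeta)|\leq C_N|\sigma|_{N,m}\la z\ra^m\la\zeta\ra^{-N}$. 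Where you genuinely diverge is the reverse inclusion $\bigcap_{s\geq 0}M^\infty_{\la\cdot\ra^{-m}\otimes\la\cdot\ra^s}(\rdd)\subset S^m(\rdd)$: the paper does not prove this at all but cites \cite[Remark 2.18]{HolstToftWahlberg2007} for the core equality $S^m(\rdd)=\bigcap_{n\geq 0}M^{\infty,1}_{\la\cdot\ra^{-m}\otimes\la\cdot\ra^n}(\rdd)$, whereas you derive it directly from the inversion formula \eqref{invformula}, differentiating under the integral and absorbing the powers of $\zeta$ by choosing $s$ large. Your route buys a self-contained proof and makes visible that each seminorm $|\cdot|_{n,m}$ is controlled by finitely many modulation seminorms (hence the equivalence of the families in \eqref{seqnorm}); the paper's route is shorter at the cost of an external reference. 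One small point worth making explicit in your reverse direction: the inversion formula is a priori an identity in $\cS'(\rdd)$, so you should note that the absolute convergence supplied by $|V_g\sigma(z,\zeta)|\leq C_s\la z\ra^m\la\zeta\ra^{-s}$ (for $s$ large) is what lets you identify $\sigma$ with the smooth function defined by the convergent integral before differentiating under the integral sign; with that remark the argument is complete.
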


\begin{proof}  The equality  	$S^m(\rdd)=\bigcap_{n\geq 0} M^{\infty,1}_{\la \cdot\ra^{-m}\otimes \la \cdot\ra^n}(\rdd)$ was proved in \cite[Remark 2.18]{HolstToftWahlberg2007}. The embeddings in \eqref{modonquasiBan} then give the equalities in \eqref{e2} with the equivalent families of (quasi-)norms in \eqref{seqnorm}.\par
		Let us show the estimate \eqref{e1}.
	For $f\in \cC_m^n(\rd)$ ($\cC^n(\rd)$ if $m=0$) and any multi-index $\alpha\in\bN^d$ with $|\a|\leq n$, we consider the function $\partial^\a(fT_x\bar{g})$. Taking its Fourier transform we get
	\begin{equation}\label{evgf}
	\cF(\partial^\a(fT_x\bar{g}))(\o)=(2\pi i \o)^\a\cF(fT_x\bar{g})(\o)=(2\pi i \o)^\a V_gf\phas.
	\end{equation}
	In what follows we use the boundedness of $\cF: L^{1}(\rd)\to \cC_0(\rd)$,  Peetre's inequality $\la x\ra^{-m}\leq 2^{-m} \la x-t\ra^{|m|}\la t\ra^{-m}$, and Leibniz' formula:
	\begin{align*}
	\la x\ra^{-m} \|\cF(\partial^\a(fT_x\bar{g}))\|_{\infty}&\leq \la x\ra^{-m} \|\partial^\a(fT_x\bar{g})\|_1\\
	&= \left\|\la x\ra^{-m}\sum_{\beta\leq  \alpha}\binom \alpha \beta \partial^\beta f \,T_x\partial^ {\alpha-\beta}\bar{g}\right\|_1\\
	&\leq 2^{-m}\sum_{\beta\leq  \alpha}\binom \alpha \beta \|(\partial^\beta f)\la \cdot\ra^{-m}\|_\infty\|(\partial^ {\alpha-\beta}\bar{g})\la\cdot\ra^{|m|}\|_1\\
	&\leq 2^{-m}\sup_{|\beta|\leq n}\|(\partial^\beta f)\la \cdot\ra^{-m}\|_\infty M_\al\max_{\beta\leq  \alpha}\binom \alpha \beta \|(\partial^ {\alpha-\beta}\bar{g})\la\cdot\ra^{|m|}\|_1\\
	&= C_{\a,g,m}|f|_{n,m},
	\end{align*}
	where $ C_{g,\a,m}=2^{-m}M_\al\max_{\beta\leq  \alpha}\binom \alpha \beta \|(\partial^ {\alpha-\beta}\bar{g})\la\cdot\ra^{|m|}\|_1$ with $M_\al=\#\{\be\in\bN^d,\be\leq\al\}$. The estimate above and formula \eqref{evgf} yield
	\begin{equation}\label{evgf2}
	\sup_{x\in\rd} |V_g f\phas| \la x\ra^{-m}\leq C_{g,\a,m}|f|_{n,m}|\o^\a|^{-1},\quad |\o|\not=0,\quad\forall |\a|\leq n.
	\end{equation}
	Now if $f\in \bigcap_{n\geq 0}\cC_m^n(\rd)$ then for every $\a\in\bN^d$ there exists $C=C_\alpha>0$ such that the estimate in \eqref{evgf2} holds true.  Since $\la \o\ra^n\leq \sum_{|\a|\leq n}c_\a|\o^\a|$ for suitable $c_\a\geq 0$, we obtain
	$$\sup_{x,\o\in\rd}|V_g f\phas|\la x\ra^{-m}\la \o\ra^n\leq C|f|_{n,m},\quad\forall n\geq0$$
	for a suitable $C=C(n,m)>0$ that is \eqref{e1}.
	\end{proof}

	In particular, for $m=0$ we recapture the outcome of Lemma  \ref{lemma2.1}.

For the case $m=0$ we can characterize  the H\"{o}rmander class $S^0(\rdd)=S_{0,0}^0(\rdd)$ by H\"{o}lder-Zygmund classes $\cC^s(\rdd)=B^{\infty,\infty}_s(\rdd)$ and by  Besov spaces. 

\begin{lemma}\label{Besov}
For $0<q\leq\infty$, we have the equalities 
\begin{equation}\label{eqfrechetsobolev}
S^0_{0,0}(\rdd)=\bigcap_{s\geq 0} \cC^s(\rdd)=  \bigcap_{s\geq 0} B^{\infty,q}_s(\rdd)= \bigcap_{s\geq 0} M^{\infty,q}_{1\otimes \la \cdot\ra^s}(\rdd),
\end{equation}
with equivalent families of (quasi-)norms 
\begin{equation}\label{seqnormsobolev}
\{\|\cdot\|_{B^{\infty,\infty}_s}\}_{s\geq 0},\quad\{\|\cdot\|_{B^{\infty,q}_s}\}_{s\geq 0},\quad \{\|\cdot\|_{M^{\infty,q}_{1\otimes\la\cdot\ra^s}}\}_{s\geq 0}.
\end{equation}
\end{lemma}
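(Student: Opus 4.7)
The plan is to use the already established modulation-space characterization of $S^0_{0,0}$ from Lemma \ref{charsm00} (with $m=0$) and then pivot to Besov spaces via the known embeddings \eqref{besovmodone}. So only two genuinely new equalities need to be addressed: the identity with $\bigcap_s B^{\infty,q}_s$ for arbitrary $0<q\leq\infty$, and the identity with $\bigcap_s \cC^s=\bigcap_s B^{\infty,\infty}_s$. I will also track the constants to get equivalence of the corresponding families of (quasi-)norms.

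\textbf{Step 1: reduction to Besov spaces.} Applying \eqref{besovmodone} on $\rdd$ (so the dimension parameter becomes $2d$) gives
\[
B^{\infty,q}_{s+2d/q}(\rdd)\hookrightarrow M^{\infty,q}_{1\otimes\la\cdot\ra^s}(\rdd)\hookrightarrow B^{\infty,q}_{s+2d\theta(q)}(\rdd),\quad s\in\bR,
\]
with $\theta(q)=\min\{0,1/q-1\}$. Taking the intersection over $s\geq 0$, the shifts by the fixed quantities $2d/q$ and $2d\theta(q)$ disappear (for each $t\in\bR$, $\bigcap_{s\geq 0}X_{s+t}=\bigcap_{s\geq 0}X_s$ as soon as $\{X_s\}$ is a decreasing scale), so sandwiching yields
\[
\bigcap_{s\geq 0} M^{\infty,q}_{1\otimes\la\cdot\ra^s}(\rdd)=\bigcap_{s\geq 0} B^{\infty,q}_s(\rdd).
\]
Combined with Lemma \ref{charsm00} for $m=0$, this already identifies $S^0_{0,0}(\rdd)$ with the Besov intersection for every $0<q\leq\infty$.

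\textbf{Step 2: removing the dependence on $q$ on the Besov side.} The key elementary fact is that for any $0<q_1,q_2\leq\infty$ and any $s_1>s_2$, one has a continuous embedding $B^{\infty,q_1}_{s_1}(\rdd)\hookrightarrow B^{\infty,q_2}_{s_2}(\rdd)$. Indeed, from the definition \eqref{norm-besov}, any $f\in B^{\infty,q_1}_{s_1}$ satisfies $\sup_j 2^{js_1}\|\cF^{-1}(\psi_j\cF f)\|_\infty<\infty$, so $2^{js_2}\|\cF^{-1}(\psi_j\cF f)\|_\infty\lesssim 2^{-j(s_1-s_2)}$, which is summable (or in any $\ell^{q_2}$) independently of $q_2$. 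Since every $s\geq 0$ in one scale is bounded above by some $s'>s$ in any other scale, taking intersections over all $s\geq 0$ gives
\[
\bigcap_{s\geq 0} B^{\infty,q}_s(\rdd)=\bigcap_{s\geq 0} B^{\infty,\infty}_s(\rdd)=\bigcap_{s\geq 0}\cC^s(\rdd),
\]
the last equality holding since $\cC^s=B^{\infty,\infty}_s$ for $s>0$.

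\textbf{Step 3: equivalence of the three families of (quasi-)norms.} At each level $s\geq 0$ the embeddings in Step 1 are continuous, so their operator (quasi-)norms provide constants $C=C(s,d,q)$ such that $\|\cdot\|_{B^{\infty,q}_s}\lesssim \|\cdot\|_{M^{\infty,q}_{1\otimes\la\cdot\ra^{s+2d/q}}}$ and $\|\cdot\|_{M^{\infty,q}_{1\otimes\la\cdot\ra^s}}\lesssim \|\cdot\|_{B^{\infty,q}_{s+2d/q}}$, while the argument of Step 2 gives $\|\cdot\|_{B^{\infty,q}_s}\lesssim \|\cdot\|_{B^{\infty,\infty}_{s+\ve}}$ for any $\ve>0$ (and vice versa with indices swapped). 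This is exactly the mutual domination required to conclude that the three families $\{\|\cdot\|_{B^{\infty,\infty}_s}\}_{s\geq 0}$, $\{\|\cdot\|_{B^{\infty,q}_s}\}_{s\geq 0}$, $\{\|\cdot\|_{M^{\infty,q}_{1\otimes\la\cdot\ra^s}}\}_{s\geq 0}$ define the same Fr\'echet topology on $S^0_{0,0}(\rdd)$.

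\textbf{Expected obstacle.} The cleanest part is Step 1, which is entirely fed by \eqref{besovmodone}. The subtlety I expect in Step 2 is the quasi-Banach regime $0<q<1$: one has to verify that the summation over the Littlewood--Paley blocks can be dominated uniformly in $q$ by geometric series, and that the implicit constants depend only on $s_1-s_2$ and $d$; this is straightforward but must be done carefully to obtain the uniform domination of (quasi-)norms required by Step 3.
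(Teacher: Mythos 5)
Your argument is correct and follows essentially the same route as the paper, whose proof is exactly the one-line combination of Lemma \ref{charsm00} (with $m=0$) and the sandwich \eqref{besovmodone}, with the shifts $2d/q$ and $2d\theta(q)$ absorbed by intersecting over $s\geq 0$. Your Step 2 is a valid self-contained way to identify the Besov intersections for different $q$, though it is not strictly needed: since Lemma \ref{charsm00} already shows $\bigcap_{s\geq 0} M^{\infty,q}_{1\otimes\la\cdot\ra^s}(\rdd)=S^0_{0,0}(\rdd)$ for \emph{every} $0<q\leq\infty$, applying Step 1 separately for each $q$ (including $q=\infty$, which gives the $\cC^s=B^{\infty,\infty}_s$ scale) already yields the $q$-independence.
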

\begin{proof}
It is a straightforward consequence of Lemma \ref{charsm00} and the inclusion relations in  \eqref{besovmodone}. 
\end{proof}


\section{Gabor matrix decay}\label{pseudo}
Let us first represent the Gabor matrix as a kernel of an integral operator. Consider a linear and bounded operator $T$ from $\cS(\rd)$ into $\cS'(\rd)$. 
The inversion formula \eqref{treduetre} for $g\in M^1_v(\rd)$, $\|g\|_2=1$ is simply $V_g^\ast V_g={\rm Id}$. The operator   $T$ can be written as
\begin{equation}\label{e8}
T=V_g^\ast V_g T V_g^\ast V_g.
\end{equation}
The linear transformation $V_g T V_g^\ast$ is an integral operator with kernel $K_T$ given by the Gabor matrix of $T$:
$$K_T(u,z)=\la T\pi(z)g,\pi(u)g\ra,\quad u,z\in\rdd \, .
$$
By definition  and the inversion formula, $V_g$ is bounded from $M^{p,q}_w(\rd )$ to
$L^{p,q}_w(\rdd )$ and $V_g^*$  from $L^{p,q}_w(\rdd )$ to
$M^{p,q}_w(\rd )$. Hence the continuity properties  of $T$ on modulation spaces can be obtained by the corresponding ones of the operator $V_g T V_g^\ast$ on mixed-norm $L^{p,q}_w$ spaces. These issues will be studied in  Proposition \ref{Proposition-continuity-Shubin} and Corollary \ref{Cor-Continuity-BJ} and  can be  achieved by studying the Gabor matrix decay of $T$.

First, we focus on  the characterization of the Gabor matrix of $\opt(\sigma)$.

\begin{proposition}\label{Pro-uniform-bounds-via-STFTtauWDgaussian}
	Consider $0< p,q\leq\infty$, $\tau\in[0,1]$, $w\in\mathcal{M}_{v}(\bR^{4d})$ satisfying \eqref{growth}, $G\in\cS(\rdd)\setminus\{0\}$, $g\in\cS(\rd)\setminus\{0\}$ and define $ \Phi_\tau\coloneqq W_\tau(g,g)$.
	Then there exist $A= A(v,g,G)>0$, $B= B(v,g,G)>0$ such that
	\begin{equation}
	A \norm{V_G \sigma}_{L^{p,q}_{w}}\leq\norm{V_{\Phi_\tau} \sigma}_{L^{p,q}_{w}}\leq B\norm{V_G \sigma}_{L^{p,q}_{w}},
	\end{equation}
	for every $\tau\in[0,1]$ and $\sigma\in M^{p,q}_{w}(\rdd)$.
\end{proposition}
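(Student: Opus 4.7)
The plan is to reduce the proposition to the standard fact that different Schwartz windows yield equivalent modulation (quasi-)norms, and then track the dependence of all constants on $\tau\in[0,1]$.

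First I would pick an auxiliary window $\gamma\in\cS(\rdd)$ (independent of $\tau$) with $\la G,\gamma\ra\neq 0$ and apply the standard window-change inequality (Gr\"ochenig's Lemma~11.3.3, extended to the full quasi-Banach range by Galperin--Samarah) to obtain the pointwise estimate
\begin{equation*}
|V_{\Phi_\tau}\sigma(y)|\leq \frac{1}{|\la G,\gamma\ra|}\bigl(|V_G\sigma|*|V_{\Phi_\tau}\gamma|\bigr)(y),\qquad y\in\bR^{4d},
\end{equation*}
and the dual estimate with the roles of $\Phi_\tau$ and $G$ reversed (using a second fixed auxiliary $\gamma'\in\cS(\rdd)$ with $\la\Phi_\tau,\gamma'\ra\neq 0$; note that by continuity in $\tau$ this inner product stays bounded away from zero on a suitable open neighbourhood, and if necessary one may replace $\gamma'$ by $G$ itself since $\la \Phi_\tau,G\ra=\la g\otimes \overline g, W_\tau(\cdot,\cdot)^*G\ra$ depends continuously on $\tau$ and can be arranged nonzero). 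Taking $L^{p,q}_w$-(quasi-)norms and invoking the quasi-Banach weighted Young inequality $L^r_v* L^{p,q}_w\hookrightarrow L^{p,q}_w$ for $0<r\leq\min\{1,p,q\}$ (Galperin--Samarah), I would reduce matters to proving
\begin{equation*}
\sup_{\tau\in[0,1]} \|V_{\Phi_\tau}\gamma\|_{L^r_v}<\infty\qquad\text{and}\qquad \sup_{\tau\in[0,1]} \|V_{\Phi_{\tau}}\gamma'\|_{L^r_v}<\infty .
\end{equation*}

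The core step, and the only genuinely $\tau$-dependent one, is the uniform estimate above. I would argue that $\Phi_\tau=W_\tau(g,g)$ belongs to $\cS(\rdd)$ with Schwartz seminorms uniformly bounded in $\tau\in[0,1]$. Rewriting
\begin{equation*}
W_\tau(g,g)(x,\omega)=\cF_{y\to\omega}\bigl[g(x+\tau y)\overline{g(x-(1-\tau)y)}\bigr],
\end{equation*}
one observes that the integrand is the composition of the Schwartz function $(u,v)\mapsto g(u)\overline{g(v)}$ on $\rdd$ with the linear change of variables $(x,y)\mapsto(x+\tau y,x-(1-\tau)y)$, whose matrix and its inverse have entries bounded uniformly in $\tau\in[0,1]$ (the determinant is constant equal to $-1$). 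Hence the Schwartz seminorms of $(x,y)\mapsto g(x+\tau y)\overline{g(x-(1-\tau)y)}$ are uniformly controlled, and since the partial Fourier transform in $y$ is a topological isomorphism on $\cS(\rdd)$, so are those of $\Phi_\tau$. Consequently $V_{\Phi_\tau}\gamma\in\cS(\bR^{4d})$ with Schwartz seminorms bounded uniformly in $\tau$, and in particular $\|V_{\Phi_\tau}\gamma\|_{L^r_v}$ and $\|V_{\Phi_\tau}\gamma'\|_{L^r_v}$ are uniformly bounded (using the polynomial growth \eqref{growth} of $v$).

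The main obstacle is precisely this uniform Schwartz control of the family $\{\Phi_\tau\}_{\tau\in[0,1]}$; once it is established, combining the two window-change inequalities produces the desired two-sided bound with constants $A=A(v,g,G)$ and $B=B(v,g,G)$ independent of $\tau$, proving the proposition.
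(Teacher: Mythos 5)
Your argument is essentially the paper's own proof with the citations unpacked: the paper likewise reduces everything to the fact that $\{\Phi_\tau\}_{\tau\in[0,1]}$ is a bounded subset of $\cS(\rdd)$ (quoting de Gosson--Toft for the continuity and local uniform boundedness of $(\tau,f,g)\mapsto W_\tau(f,g)$, which you instead prove directly via the $\tau$-uniform linear change of variables) and then invokes the window-change theorems of Gr\"ochenig (Banach case) and Galperin--Samarah (quasi-Banach case), whose constants are uniform over bounded sets of windows. The one soft spot is your handling of $\la \Phi_\tau,\gamma'\ra$ in the lower bound --- continuity alone does not produce a single $\gamma'$ with this inner product bounded away from zero for all $\tau\in[0,1]$ --- but this is repaired at once by taking $\gamma'=\Phi_\tau$ itself, since Moyal's identity gives $\|\Phi_\tau\|_2^2=\|g\|_2^4>0$ and your uniform Schwartz bound already controls $\sup_{\tau}\|V_G\Phi_\tau\|_{L^r_v}$.
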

\begin{proof}
	By Proposition 2.2 and Remark 2.3 in \cite{deGossonToft-BJ2020}
	the mapping
	$$(\tau, f,g)\mapsto W_\tau(f,g)$$ 
		is continuous from $\bR\times\cS(\rd)\times\cS(\rd)$ to $\cS(\rdd)$
		 and locally uniformly bounded. Since $\Phi_\tau$ for $\tau\in [0, 1]$ belongs to a bounded set
	in $\cS(\rdd)$, the result follows  immediately from \cite[Theorem
	11.3.7]{book} for $p,q\geq 1$ and \cite[Theorem 3.1]{Galperin2004} for $0<p,q\leq\infty$. 
\end{proof}

Finally, we need the following result  for $\tau$-pseudodifferential operators  \cite[Lemma 4.1]{CNT}.
\begin{lemma}\label{lem:STFT-gaborm}
	Fix a window $g\in \cS(\rd)\setminus\{0\}$
	and define $\Phi_{\tau}=W_{\tau}(g,g)$ for $\tau\in\left[0,1\right]$.
	Then, for $\sigma\in \cS'\left(\mathbb{R}^{2d}\right)$,
	\begin{equation}
	\left|\left\langle \opt \left(\sigma\right)\pi\left(z\right)g,\pi\left(u\right)g\right\rangle \right|=\left|{V}_{\Phi_{\tau}}\sigma\left(\mathcal{T}_{\tau}\left(z,u\right),J\left(u-z\right)\right)\right|\label{eq:gaborm as STFT}.
	\end{equation}
	where $z=(z_1,z_2)$, $u=(u_1,u_2)$,  the operator $\mathcal{T}_{\tau}$ is defined in \eqref{CL} and $J$ is given by
	\begin{equation*}
	J(z)=(z_2,-z_1).
	\end{equation*}
\end{lemma}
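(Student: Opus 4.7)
The plan is to reduce the Gabor matrix entry to a short-time Fourier transform of $\sigma$ via a covariance identity for the cross-$\tau$-Wigner distribution, and then take absolute values.

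First, by the weak definition \eqref{tauweak},
\begin{equation*}
\langle \opt(\sigma)\pi(z)g, \pi(u)g\rangle = \langle \sigma, W_\tau(\pi(u)g, \pi(z)g)\rangle,
\end{equation*}
so the crux is to compute $W_\tau(\pi(u)g, \pi(z)g)$ explicitly in terms of $\Phi_\tau = W_\tau(g,g)$.

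To this end I would substitute the explicit formulas $\pi(u)g(x+\tau y) = e^{2\pi i u_2(x+\tau y)} g(x+\tau y - u_1)$ and $\overline{\pi(z)g(x-(1-\tau)y)} = e^{-2\pi i z_2(x-(1-\tau)y)}\,\overline{g(x-(1-\tau)y - z_1)}$ into \eqref{tauwig}, collect the exponentials into a single factor, and then perform the affine change of variables $(x,y) \mapsto (x', y')$ that absorbs the translations $u_1, z_1$ inside the two copies of $g$. The new variables are forced by the linear system $x' + \tau y' = x + \tau y - u_1$ and $x' - (1-\tau)y' = x - (1-\tau)y - z_1$, whose unique solution gives $y' = y + (z_1 - u_1)$ and an $x'$ whose offset from $x$ is the convex combination of $z_1$ and $u_1$ prescribed by $\mathcal{T}_\tau$ (formula \eqref{CL}); the analogous convex combination in the frequency variable emerges naturally from the surviving exponent.

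After the substitution the integral collapses to $\Phi_\tau((x,\omega) - \mathcal{T}_\tau(z,u))$ times a unimodular factor. Splitting this factor into a part depending on $(x,\omega)$ and a part independent of it, the $(x,\omega)$-dependent part should reorganize as $e^{2\pi i J(u-z)\cdot(x,\omega)}$, producing the covariance identity
\begin{equation*}
W_\tau(\pi(u)g, \pi(z)g) = c_0(u,z)\, M_{J(u-z)} T_{\mathcal{T}_\tau(z,u)} \Phi_\tau, \qquad |c_0(u,z)| = 1.
\end{equation*}
Pairing with $\sigma$ and recognizing $\langle \sigma, M_\xi T_\zeta \Phi_\tau\rangle = V_{\Phi_\tau}\sigma(\zeta,\xi)$, one obtains
$\langle \opt(\sigma)\pi(z)g, \pi(u)g\rangle = \overline{c_0(u,z)}\, V_{\Phi_\tau}\sigma(\mathcal{T}_\tau(z,u), J(u-z))$; taking absolute values kills $c_0$ and yields \eqref{eq:gaborm as STFT}.

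The main obstacle is the phase bookkeeping: the modulations coming from $\pi(u)$ and $\pi(z)$, the Fourier kernel $e^{-2\pi i y\omega}$, and the phase produced by the shift $y \mapsto y + (z_1 - u_1)$ must conspire to reorganize into exactly the modulation $M_{J(u-z)}$, with every other term collecting into a single $(x,\omega)$-independent unimodular constant. Matching the permutation-with-sign-flip pattern encoded in $J$ against these exponentials is what pins down this step and makes the final identity hold after taking absolute values.
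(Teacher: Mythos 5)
Your strategy is the right one and is, in substance, the proof of the cited source: the paper itself does not prove this lemma but imports it from \cite{CNT}, where it is established by exactly the covariance computation you outline (write the matrix entry as $\langle\sigma,W_\tau(\pi(u)g,\pi(z)g)\rangle$ via \eqref{tauweak}, show $W_\tau(\pi(u)g,\pi(z)g)=c_0\,M_{J(u-z)}T_a\Phi_\tau$ with $|c_0|=1$, and read off the STFT). So there is no divergence of method.

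The one substantive step, however, is precisely the one you assert rather than carry out, and it does not land where you say it does. Solving $x'+\tau y'=x+\tau y-u_1$ and $x'-(1-\tau)y'=x-(1-\tau)y-z_1$ gives $y'=y-(u_1-z_1)$ and $x'=x-\bigl((1-\tau)u_1+\tau z_1\bigr)$, and the surviving frequency shift is $\tau u_2+(1-\tau)z_2$; hence the translation parameter is $a=\bigl((1-\tau)u_1+\tau z_1,\ \tau u_2+(1-\tau)z_2\bigr)$, which with the definition \eqref{CL} is $\mathcal{T}_\tau(u,z)$, not $\mathcal{T}_\tau(z,u)$. A sanity check at $\tau=0$ (Kohn--Nirenberg) confirms this: the symbol is sampled at the output time and the input frequency, i.e.\ at $(u_1,z_2)=\mathcal{T}_0(u,z)$, whereas $\mathcal{T}_0(z,u)=(z_1,u_2)$. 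So if you do the bookkeeping honestly you will obtain the two arguments of $\mathcal{T}_\tau$ interchanged relative to the statement (equivalently, $\tau\leftrightarrow 1-\tau$ swapped inside \eqref{CL}); the modulation parameter does come out as $J(u-z)$, as claimed. This is a transcription-level inconsistency between \eqref{CL} and \eqref{eq:gaborm as STFT} rather than a flaw in your method, and it is harmless downstream because $\la\mathcal{T}_\tau(u,z)\ra^{m}\lesssim\la\mathcal{T}_\tau(z,u)\ra^{m}\la u-z\ra^{|m|}$ by Peetre's inequality, so the discrepancy is absorbed into $H_\tau$. But a proof claiming to verify \eqref{eq:gaborm as STFT} as an exact identity must either produce $\mathcal{T}_\tau(z,u)$ or flag the mismatch: write out the phases and the change of variables explicitly instead of asserting that they ``conspire'' to match \eqref{CL}.
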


We are ready to state the characterization of $\tau$-operators with symbols in  $M_{\la \cdot\ra^{-m}\otimes\la \cdot\ra^s}^{\infty,q}(\mathbb{R}^{2d})$.
\begin{theorem}\label{teor41}
	Consider  $g\in \cS(\rd)\setminus\{0\}$ and a lattice $\Lambda \subset\mathbb{R}^{2d}$ such that  $\mathcal{G}\left(g,\Lambda\right)$ is a  Gabor frame for $L^{2}\left(\mathbb{R}^{d}\right)$.
	For  $\tau\in [0,1]$, let $\mathcal{T}_{\tau}$ be the linear transformation defined in \eqref{CL}. For any $s,m\in\bR$, $0<q\leq\infty$, the following properties are equivalent:
	\begin{enumerate}
		\item[$(i)$] $\sigma\in M_{\la \cdot\ra^{-m}\otimes\la \cdot\ra^s}^{\infty,q}\left(\mathbb{R}^{2d}\right)$.
		\item[$(ii)$] $\sigma\in\mathcal{S}'\left(\mathbb{R}^{2d}\right)$ and there exists
		a function $H_\tau\in L^q_{\la \cdot\ra^{s}}(\rdd)$ satisfying \eqref{H-tau} such that
		\begin{equation}
		\left|\left\langle \opt \left(\sigma\right)\pi\left(z\right)g,\pi\left(u\right)g\right\rangle \right|\le  H_\tau(u-z)\la\mathcal{T}_{\tau}(z,u)\ra^m, \qquad\forall u,z\in\mathbb{R}^{2d}.\label{eq:almdiag J}
		\end{equation}
		\item[$(iii)$] $\sigma\in\mathcal{S}'\left(\mathbb{R}^{2d}\right)$ and there exists
		 a sequence $h_\tau\in \ell^q_{\la \cdot\ra^{s}}(\Lambda)$ with $\|h_\tau\|_{\ell^q_{\la \cdot\ra^{s}}}\leq C$, for every $\tau\in [0,1]$ such that
		\begin{equation}
		\left|\left\langle \opt \left(\sigma\right)\pi\left(\mu\right)g,\pi\left(\lambda\right)g\right\rangle \right|\le C {h_\tau( \lambda-\mu) \la\mathcal{T}_{\tau}(\mu,\lambda)\ra^m}, \qquad\forall\lambda,\mu\in\Lambda.\label{eq:almdiag discr}
		\end{equation}
	\end{enumerate}
\end{theorem}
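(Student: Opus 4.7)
The proof rests on two cornerstones already established in this section. Lemma~\ref{lem:STFT-gaborm} identifies the Gabor matrix of $\opt(\sigma)$ pointwise with the STFT of $\sigma$ computed with the window $\Phi_\tau=W_\tau(g,g)$ and evaluated at the affinely transformed argument $(\mathcal{T}_\tau(z,u),J(u-z))$. Proposition~\ref{Pro-uniform-bounds-via-STFTtauWDgaussian} in turn ensures that the $M^{\infty,q}_{\la\cdot\ra^{-m}\otimes\la\cdot\ra^s}(\bR^{2d})$ (quasi-)norm computed with window $\Phi_\tau$ is comparable, uniformly in $\tau\in[0,1]$, to the one computed with a fixed Schwartz window $G$. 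The plan is to unfold the three equivalences through the linear bijection $\Psi_\tau\colon(z,u)\mapsto(\mathcal{T}_\tau(z,u),J(u-z))$ of $\bR^{4d}$, and then to pass from continuous to discrete estimates by a Wiener amalgam argument.

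For $(i)\Leftrightarrow(ii)$, I would exploit that $J$ is orthogonal (so $|J(u-z)|=|u-z|$) and that, once $u-z$ is fixed, $\mathcal{T}_\tau(z,u)$ still sweeps all of $\bR^{2d}$. Setting
$$
H_\tau(w):=\sup_{y\in\bR^{2d}}|V_{\Phi_\tau}\sigma(y,J(w))|\,\la y\ra^{-m},
$$
Lemma~\ref{lem:STFT-gaborm} immediately delivers \eqref{eq:almdiag J}, while the orthogonal change of variable $\eta=J(w)$ yields
$$
\|H_\tau\|_{L^q_{\la\cdot\ra^s}}=\|V_{\Phi_\tau}\sigma\|_{L^{\infty,q}_{\la\cdot\ra^{-m}\otimes\la\cdot\ra^s}},
$$
which Proposition~\ref{Pro-uniform-bounds-via-STFTtauWDgaussian} controls uniformly in $\tau$ by $\|\sigma\|_{M^{\infty,q}_{\la\cdot\ra^{-m}\otimes\la\cdot\ra^s}}$. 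The reverse implication inverts the map: for each $(y,\eta)\in\bR^{4d}$ there is a unique $(z,u)=\Psi_\tau^{-1}(y,\eta)$, so Lemma~\ref{lem:STFT-gaborm} combined with \eqref{eq:almdiag J} gives $|V_{\Phi_\tau}\sigma(y,\eta)|\le H_\tau(J^{-1}\eta)\,\la y\ra^m$; taking the supremum in $y$ and the $L^q_{\la\cdot\ra^s}$-norm in $\eta$ recovers $\sigma\in M^{\infty,q}_{\la\cdot\ra^{-m}\otimes\la\cdot\ra^s}$ again via Proposition~\ref{Pro-uniform-bounds-via-STFTtauWDgaussian}.

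For $(ii)\Leftrightarrow(iii)$, the forward step simply restricts \eqref{eq:almdiag J} to $\lambda,\mu\in\Lambda$, but securing the $\ell^q_{\la\cdot\ra^s}$-control on $h_\tau$ requires replacing pointwise sampling by the local suprema $h_\tau(\nu):=\sup_{w\in\nu+Q}H_\tau(w)$, where $Q$ is a fundamental domain of $\Lambda$. Standard Wiener amalgam machinery then gives $\|h_\tau\|_{\ell^q_{\la\cdot\ra^s}}\lesssim\|H_\tau\|_{L^q_{\la\cdot\ra^s}}$, the key ingredient being the pointwise domination of $|V_{\Phi_\tau}\sigma|$ by a convolution of itself against the Schwartz kernel $|V_{\Phi_\tau}\Phi_\tau|$. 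The reverse step $(iii)\Rightarrow(ii)$ (hence $(iii)\Rightarrow(i)$) reconstructs a continuous majorant by spreading $h_\tau$ against a bump supported in $Q$ and again upgrading the STFT pointwise via the same Schwartz-kernel domination.

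The principal technical hurdle lies in this last Wiener amalgam passage, for two reasons. First, in the quasi-Banach range $0<q<1$ ordinary Young's inequality is unavailable and must be replaced by its band-limited analogue (cf.\ \cite{Kobayashi2006}), exactly as in the proof of Proposition~\ref{C2altnorm}. Second, all implicit constants must remain independent of $\tau$ to yield the uniform bounds \eqref{H-tau} and their discrete counterpart; this is automatic because $\{\Phi_\tau\}_{\tau\in[0,1]}$ is a bounded subset of $\cS(\bR^{2d})$, so that the kernel $|V_{\Phi_\tau}\Phi_\tau|$ is dominated uniformly in $\tau$ by a fixed rapidly decreasing function, in accordance with the uniformity already furnished by Proposition~\ref{Pro-uniform-bounds-via-STFTtauWDgaussian}.
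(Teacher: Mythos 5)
Your proposal is correct and follows essentially the same route as the paper: the equivalence $(i)\Leftrightarrow(ii)$ is proved exactly as you do, by combining Lemma~\ref{lem:STFT-gaborm} with the $\tau$-uniform window change of Proposition~\ref{Pro-uniform-bounds-via-STFTtauWDgaussian}, taking $H_\tau(u)=\sup_{w}|V_{\Phi_\tau}\sigma(w,Ju)|\la w\ra^{-m}$ and inverting the map $(z,u)\mapsto(\mathcal{T}_\tau(z,u),J(u-z))$. For $(ii)\Leftrightarrow(iii)$ the paper simply defers to the arguments of \cite[Theorem 3.1]{CGRNFIOJMPA2013} and \cite[Theorem 3.2]{grochenig2}, which is precisely the Wiener amalgam/Gabor-frame machinery (with the quasi-Banach Young inequality and the $\tau$-uniform bound on $V_{\Phi_\tau}\Phi_\tau$) that you sketch.
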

\begin{proof}
	The proof follows the pattern of the corresponding one for Weyl operators with symbols in weighted Sj\"ostrand's classes \cite[Theorem 3.2]{grochenig2}.\\
	$(i)\Rightarrow (ii)$ This implication comes easily from the characterization \eqref{eq:gaborm as STFT}.
	In details, observing that $\la Ju\ra=\la u\ra$,
	\begin{align*}
	\left|\left\langle \opt \left(\sigma\right)\pi\left(z\right)g,\pi\left(u\right)g\right\rangle \right|&=\left|{V}_{\Phi_{\tau}}\sigma\left(\mathcal{T}_{\tau}\left(z,u\right),J\left(u-z\right)\right)\right|\\
	&\leq\sup_{w\in\rdd}\left(|V_{\Phi_\tau}\sigma|(w,J\left(u-z\right)| \la w\ra^{-m}\right)\la\mathcal{T}_{\tau}(z,u)\ra^{m}\\
	&=H_\tau(u-z)\la\mathcal{T}_{\tau}(z,u)\ra^{m},
	\end{align*}
	where
	$$H_\tau(u):= \sup_{w\in\rdd}\left(|V_{\Phi_\tau}\sigma|(w,Ju)|\la w\ra^{-m}\right).$$
	For $0<q<\infty$,
	$$\|H_\tau\|_{L^q_{{\la\cdot\ra}^{s}}}=\left(\intrdd \left[\sup_{w\in\rdd}\left(|V_{\Phi_\tau}\sigma|(w,Ju)|\la w\ra^{-m}\right)\right]^q\la u\ra^{qs} du\right)^{\frac1q}\asymp\|\sigma\|_{M_{\la \cdot\ra^{-m}\otimes\la \cdot\ra^s}^{\infty,q}},$$
	Hence by Proposition \ref{Pro-uniform-bounds-via-STFTtauWDgaussian} we obtain the estimate \eqref{H-tau}.
	The case $q=\infty$ is analogous. \par\noindent 
	$(ii)\Rightarrow (i)$ Consider the change of variables $y=\mathcal{T}_{\tau}(z,u)$ and $t=J(u-z)$, so that
	\begin{equation}\label{Eq-change-variables}
		\begin{cases}
		z(y,t)&=y-U_\tau J^{-1}t\\
		u(y,t)&=y+(I_{2d}-U_\tau)J^{-1}t
		\end{cases},\qquad
		U_\tau z\coloneqq 
		\begin{bmatrix}
		\tau I_d & 0\\
		0 & (1-\tau)I_d
		\end{bmatrix}
		z=\mathcal{T}_\tau(0,z)
	\end{equation}
	and $u(y,t)-z(y,t)=J^{-1}t$.	For $0<q<\infty$, using \eqref{eq:gaborm as STFT} and \eqref{eq:almdiag J},
	\begin{align*}
	\norm{\sigma}_{M_{\la \cdot\ra^{-m}\otimes\la \cdot\ra^s}^{\infty,q}}&\asymp	\left(\intrdd\left(\sup_{y\in\rdd}\left|{V}_{\Phi_{\tau}}\sigma\left(y,t\right)\right|\la y\ra^{-m}\right)^q\la t\ra^{qs}dt\right)^{\frac1q}\\
	&= \left(\intrdd\left(\sup_{y\in\rdd}\left|\left\langle \opt \left(\sigma\right)\pi\left(z(y,t)\right)g,\pi\left(u(y,t)\right)g\right\rangle \right|\la \mathcal{T}_{\tau}(z,u)\ra^{-m}\right)^q\la t\ra^{qs}dt\right)^{\frac1q}\\
	&\leq \left(\intrdd\left|
	H_\tau(J^{-1}t) \right|^q\la t\ra^{qs}dt\right)^{\frac1q}\\
	&\leq C,
	\end{align*}
	where we used \eqref{H-tau}. The case $q=\infty$ is analogous. \\
	$(ii)\Leftrightarrow(iii)$ The argument requires that $\mathcal{G}\left(g,\Lambda\right)$ is a  Gabor frame for $L^{2}\left(\mathbb{R}^{d}\right)$.
	Then the equivalence can be proved similarly  to \cite[Theorem 3.1]{CGRNFIOJMPA2013} and \cite[Theorem 3.2]{grochenig2}.
\end{proof}

The proof of the characterization of  the  symbol classes $S^m(\rdd)$   claimed in Theorem \ref{Th3.3}, can be inferred easily from the result above.

\begin{proof}[Proof of Theorem \ref{Th3.3}] 
	The proof is a direct application of the characterization of the classes $S^m(\rdd)$ presented in \eqref{e2}  and Theorem \ref{teor41}.
\end{proof}

The following issue is an improvement of \cite[Theorem 2.4]{ElenaHermite} and relies on the new characterization of $S^m(\rdd)$ proved in Lemma \ref{charsm00}.
\begin{proposition}\label{Th.3.4}
	Consider  $g\in \cS(\rd)\setminus\{0\}$, $m\in\bR$ and $\sigma\in S^m\left(\mathbb{R}^{2d}\right)$. For any $n\in\bN$ there exists $C=C(n)>0$, which does not depend on $\sigma$ or $\tau$, such that
	\begin{equation}\label{Ineq-Th.3.4}
	\left|\left\langle \opt \left(\sigma\right)\pi\left(z\right)g,\pi\left(u\right)g\right\rangle \right|\le C |\sigma|_{n,m}\frac{\la\mathcal{T}_{\tau}(z,u)\ra^m}{\la u-z\ra^n }, \qquad\forall\tau\in[0,1],\,\,\,\forall u,z\in\mathbb{R}^{2d}.
	\end{equation}
\end{proposition}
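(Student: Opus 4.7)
The plan is to combine three ingredients already set up in the excerpt: the representation of the Gabor matrix as an STFT of the symbol with the $\tau$-Wigner window (Lemma \ref{lem:STFT-gaborm}), the $\tau$-uniform change of window result (Proposition \ref{Pro-uniform-bounds-via-STFTtauWDgaussian}), and the pointwise STFT bound for symbols in $S^m$ derived from Lemma \ref{charsm00}, in particular the estimate \eqref{e1}.

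First, I would fix a Schwartz window $G\in\cS(\rdd)\setminus\{0\}$ (e.g.\ a Gaussian) and apply Lemma \ref{charsm00}: since $\sigma\in S^m(\rdd)\subseteq M^\infty_{\la\cdot\ra^{-m}\otimes\la\cdot\ra^n}(\rdd)$, inequality \eqref{e1} gives a constant $C_1=C_1(n,m,G)>0$ such that
\begin{equation*}
|V_G\sigma(w,\eta)|\,\la w\ra^{-m}\la\eta\ra^n \le C_1\,|\sigma|_{n,m},\qquad (w,\eta)\in\rdd\times\rdd.
\end{equation*}
Rewriting this as the pointwise bound $|V_G\sigma(w,\eta)|\le C_1|\sigma|_{n,m}\la w\ra^m\la\eta\ra^{-n}$ is the core estimate; it is where the smoothness/decay hypothesis on $\sigma$ is used.

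Next, I would switch the window from $G$ to $\Phi_\tau=W_\tau(g,g)$ using Proposition \ref{Pro-uniform-bounds-via-STFTtauWDgaussian}, but in its pointwise incarnation. The key observation is that $\{\Phi_\tau:\tau\in[0,1]\}$ is a bounded set in $\cS(\rdd)$ (by the continuity stated in the proof of Proposition \ref{Pro-uniform-bounds-via-STFTtauWDgaussian}), so the standard pointwise STFT change-of-window estimate
\begin{equation*}
|V_{\Phi_\tau}\sigma(w,\eta)|\le \tfrac{1}{\|G\|_2^2}\bigl(|V_G\sigma|*|V_{\Phi_\tau}G|\bigr)(w,\eta)
\end{equation*}
yields, after inserting the bound on $|V_G\sigma|$ and using Peetre's inequality $\la w\ra^m\le 2^{|m|}\la w-w'\ra^{|m|}\la w'\ra^m$ and $\la\eta\ra^{-n}\le 2^n\la\eta-\eta'\ra^n\la\eta'\ra^{-n}$, a constant $C_2=C_2(n,m,g,G)>0$ independent of $\tau\in[0,1]$ with
\begin{equation*}
|V_{\Phi_\tau}\sigma(w,\eta)|\le C_2\,|\sigma|_{n,m}\,\la w\ra^m\la\eta\ra^{-n},\qquad (w,\eta)\in\rdd\times\rdd,\ \tau\in[0,1].
\end{equation*}
The uniformity in $\tau$ is precisely the content of Proposition \ref{Pro-uniform-bounds-via-STFTtauWDgaussian}, since the convolution kernel $|V_{\Phi_\tau}G|$ decays faster than any polynomial uniformly in $\tau$.

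Finally, I would apply Lemma \ref{lem:STFT-gaborm} with $w=\mathcal{T}_\tau(z,u)$ and $\eta=J(u-z)$. Since $|J(u-z)|=|u-z|$, so $\la J(u-z)\ra=\la u-z\ra$, the previous bound specializes to
\begin{equation*}
\bigl|\langle\opt(\sigma)\pi(z)g,\pi(u)g\rangle\bigr|=|V_{\Phi_\tau}\sigma(\mathcal{T}_\tau(z,u),J(u-z))|\le C_2\,|\sigma|_{n,m}\frac{\la\mathcal{T}_\tau(z,u)\ra^m}{\la u-z\ra^n},
\end{equation*}
which is \eqref{Ineq-Th.3.4} with $C=C_2$. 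The only place care is needed is to verify the $\tau$-uniformity of the constant; apart from that, each step is a direct invocation of a result already available in the excerpt.
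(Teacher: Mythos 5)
Your proof is correct and follows essentially the same route as the paper's: the paper likewise combines the norm estimate \eqref{e1} for $\|\sigma\|_{M^{\infty}_{\la\cdot\ra^{-m}\otimes\la\cdot\ra^n}}$ with the identity of Lemma \ref{lem:STFT-gaborm}, relying on Proposition \ref{Pro-uniform-bounds-via-STFTtauWDgaussian} (with $p=q=\infty$) for the $\tau$-uniform change of window. The only difference is that you spell out the pointwise change-of-window convolution inequality and the Peetre estimates explicitly, whereas the paper leaves that step implicit.
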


\begin{proof} Using the characterization of the H\"{o}rmander classes $S^m(\rdd)$ in \eqref{e2} we infer that $\sigma\in M^{\infty}_{\la\cdot\ra^{-m}\otimes\la\cdot\ra^n}(\rdd)$ and, for any $n\in\bN$, the norm estimate in \eqref{e1} says that there exists  $C=C(n,m)$ such that
	\begin{equation}\label{e7}
	\norm{\sigma}_{M^{\infty}_{\la\cdot\ra^{-m}\otimes\la\cdot\ra^n}}\leq C(n,m) |\sigma|_{n,m},
	\end{equation}
	where $C(n,m)>0$ is independent of $\sigma$.	For $z,w\in\rdd$ we use Lemma \ref{lem:STFT-gaborm} and  the  norm estimate in \eqref{e7} which yield
	\begin{align*}
		\left|\left\langle \opt \left(\sigma\right)\pi\left(z\right)g,\pi\left(u\right)g\right\rangle \right|&=\left|{V}_{\Phi_{\tau}}\sigma\left(\mathcal{T}_{\tau}\left(z,u\right),J\left(u-z\right)\right)\right|\\
		&\leq C 	|\sigma|_{n,m}\frac{\la\mathcal{T}_{\tau}(z,u)\ra^m}{\la u-z\ra^n },
	\end{align*} 
that is the  desired result.
\end{proof}

For $s\in[0,+\infty)\setminus\bN$, the estimate reads as follows.
\begin{proposition}
	Consider  $g\in \cS(\rd)\setminus\{0\}$, $\tau\in\left[0,1\right]$, $m\in\bR$ and $\sigma\in S^m\left(\mathbb{R}^{2d}\right)$. For any $s\in[0,+\infty)\setminus\bN$ there exists $C=C(s,m)>0$, which does not depend on $\sigma$ or $\tau$, such that
	\begin{equation}\label{Eq-46}
	\left|\left\langle \opt \left(\sigma\right)\pi\left(z\right)g,\pi\left(u\right)g\right\rangle \right|\le C |\sigma|_{n+1,m}\frac{\la\mathcal{T}_{\tau}(z,u)\ra^m}{\la u-z\ra^s }, \qquad\forall u,z\in\mathbb{R}^{2d},
	\end{equation}
	where $n=[s]$ is the integer part of $s$.
\end{proposition}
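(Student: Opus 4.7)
The plan is to reduce the non-integer case directly to Proposition \ref{Th.3.4} by exploiting the trivial monotonicity of $r\mapsto \la u-z\ra^{-r}$ on $[0,\infty)$, which holds because $\la u-z\ra\geq 1$. Setting $n=[s]$, I have $n<s<n+1$ with $n+1\in\bN$, so Proposition \ref{Th.3.4} is applicable at the integer $n+1$, producing the sharper decay $\la u-z\ra^{-(n+1)}$; the required decay $\la u-z\ra^{-s}$ then follows by relaxation.

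Concretely, applying Proposition \ref{Th.3.4} with the integer $n+1$ in place of $n$ gives a constant $C(n+1)>0$, independent of $\sigma$ and $\tau$, such that
\begin{equation*}
\left|\left\langle \opt(\sigma)\pi(z)g,\pi(u)g\right\rangle\right|\leq C(n+1)\,|\sigma|_{n+1,m}\,\frac{\la\mathcal{T}_\tau(z,u)\ra^m}{\la u-z\ra^{n+1}},\quad \forall u,z\in\rdd.
\end{equation*}
Since $\la u-z\ra\geq 1$ and $n+1-s>0$, one has $\la u-z\ra^{-(n+1)}\leq \la u-z\ra^{-s}$, and inserting this inequality into the above bound delivers \eqref{Eq-46} with $C(s,m):=C(n+1)$, the dependence on $s$ entering only through $n=[s]$.

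As an alternative, slightly more refined route, I could interpolate the two bounds furnished by Proposition \ref{Th.3.4} at the consecutive integers $n$ and $n+1$: with $\theta=s-n\in(0,1)$, taking the geometric mean (the $(1-\theta)$--power of the bound at $n$ times the $\theta$--power of the bound at $n+1$) and using the obvious monotonicity $|\sigma|_{n,m}\leq|\sigma|_{n+1,m}$ of the seminorms \eqref{semihormander} yields the same estimate with constant $C(n)^{1-\theta}C(n+1)^{\theta}$. In either formulation the argument is essentially automatic; the substantive work was already carried out in the proof of Proposition \ref{Th.3.4}, so there is no genuine obstacle here beyond bookkeeping.
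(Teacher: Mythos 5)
Your proof is correct and follows essentially the same route as the paper: both reduce the non-integer case to the integer case $n+1=[s]+1$ and then relax the decay, the paper doing so via the modulation-space embedding $M^{\infty}_{\la\cdot\ra^{-m}\otimes\la\cdot\ra^{n+1}}(\rdd)\hookrightarrow M^{\infty}_{\la\cdot\ra^{-m}\otimes\la\cdot\ra^{s}}(\rdd)$ from \eqref{inclmod}, and you doing so via the equivalent pointwise monotonicity $\la u-z\ra^{-(n+1)}\leq\la u-z\ra^{-s}$. The interpolation variant you sketch is also valid but adds nothing beyond the first argument.
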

\begin{proof}
The result is attained by the  the same argument as Proposition \ref{Th.3.4} and the inclusion relations between modulation spaces in \eqref{inclmod}.
\end{proof}

\subsection{Boundedness results}\label{3.1}
The characterization of the class $S^m$ in Lemma \ref{charsm00} and Theorem \ref{3.1} are the key tool for  boundedness properties of $\tau$-operators on weighted modulation spaces.  
\begin{proposition}\label{Proposition-continuity-Shubin}
	Consider $\tau\in\left[0,1\right]$, $m\in\bR$, $\sigma\in S^m(\rdd)$, $0<p,q\leq\infty$. Then $\opt(\sigma)$, from $\cS(\rd)$ to $\cS'(\rd)$,  extends uniquely to a bounded operator  
	\begin{equation*}
		\opt(\sigma)\colon M^{p,q}_{\la\cdot\ra^{r+m}}(\rd)\to M^{p,q}_{\la\cdot\ra^{r}}(\rd),
	\end{equation*}
for every $r\in\bR$.
\end{proposition}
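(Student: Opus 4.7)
The plan is to use the discrete Gabor-frame characterization of the modulation space norms together with the rapid off-diagonal decay of the Gabor matrix of $\opt(\sigma)$, provided by Proposition~\ref{Th.3.4}, to reduce the claimed boundedness to a weighted discrete convolution estimate that works uniformly for all $0<p,q\leq\infty$.

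Fix $g\in\cS(\rd)$ with $\|g\|_2=1$, and let $\tilde g\in\cS(\rd)$ be a canonical dual window for the Gabor frame $\mathcal{G}(g,\Lambda)$ (such a window exists as an element of $\cS$, or at least of $M^1_v$, by Janssen-type arguments). The standard Gabor characterization
$$\|f\|_{M^{p,q}_{\la\cdot\ra^s}}\asymp \bigl\|\bigl(\la f,\pi(\lambda)g\ra\bigr)_{\lambda\in\Lambda}\bigr\|_{\ell^{p,q}_{\la\cdot\ra^s}(\Lambda)},\quad s\in\bR,\;0<p,q\leq\infty,$$
together with the reproducing identity $f=\sum_\mu\la f,\pi(\mu)\tilde g\ra\pi(\mu)g$, yields
$$\la\opt(\sigma)f,\pi(\lambda)g\ra=\sum_{\mu\in\Lambda}\la f,\pi(\mu)\tilde g\ra\,\la\opt(\sigma)\pi(\mu)g,\pi(\lambda)g\ra.$$
Since $\sigma\in S^m(\rdd)$, Proposition~\ref{Th.3.4} gives, for every $n\in\bN$,
$$|\la\opt(\sigma)\pi(\mu)g,\pi(\lambda)g\ra|\leq C_n|\sigma|_{n,m}\frac{\la\mathcal{T}_\tau(\mu,\lambda)\ra^m}{\la\lambda-\mu\ra^n},$$
with $C_n$ independent of $\tau\in[0,1]$.

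Writing $\mathcal{T}_\tau(\mu,\lambda)=\mu+U_\tau(\lambda-\mu)$ with $\|U_\tau\|\leq 1$, two applications of Peetre's inequality give $\la\mathcal{T}_\tau(\mu,\lambda)\ra^m\lesssim\la\mu\ra^m\la\lambda-\mu\ra^{|m|}$ and $\la\lambda\ra^r\lesssim\la\mu\ra^r\la\lambda-\mu\ra^{|r|}$, hence
$$|\la\opt(\sigma)f,\pi(\lambda)g\ra|\la\lambda\ra^r\leq C\sum_{\mu\in\Lambda}F_n(\lambda-\mu)|\la f,\pi(\mu)\tilde g\ra|\la\mu\ra^{r+m},$$
where $F_n(\nu):=\la\nu\ra^{|m|+|r|-n}$. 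The right-hand side is a discrete convolution $F_n\ast a$ with $a(\mu):=|\la f,\pi(\mu)\tilde g\ra|\la\mu\ra^{r+m}$. Choosing $n$ large enough that $F_n\in\ell^{\min(1,p,q)}(\Lambda)$, the Young-type inequality for sequences, namely $\|F\ast a\|_{\ell^{p,q}}\leq\|F\|_{\ell^1}\|a\|_{\ell^{p,q}}$ when $p,q\geq 1$ and the algebra property $\|F\ast a\|_{\ell^p}\leq\|F\|_{\ell^p}\|a\|_{\ell^p}$ valid for $0<p\leq 1$ (iterated in the mixed-norm setting), yields the estimate $\|\opt(\sigma)f\|_{M^{p,q}_{\la\cdot\ra^r}}\lesssim\|f\|_{M^{p,q}_{\la\cdot\ra^{r+m}}}$ on $\cS(\rd)$. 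Uniqueness of the extension then follows from density of $\cS(\rd)$ in $M^{p,q}_{\la\cdot\ra^{r+m}}(\rd)$ when $p,q<\infty$, and a weak-$\ast$ approximation argument otherwise.

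The main obstacle is handling the quasi-Banach range $p<1$ or $q<1$ uniformly with the Banach case: the continuous $L^1\ast L^{p,q}\hookrightarrow L^{p,q}$ inequality is unavailable there, which forces the passage to the discrete Gabor picture and the use of the $\ell^p$-algebra property for $p\leq 1$. This is made possible precisely because Proposition~\ref{Th.3.4} provides polynomial decay of arbitrary order, a feature of symbols in $S^m(\rdd)$ that reflects its characterization in Lemma~\ref{charsm00} as the intersection over $s\geq 0$ of the modulation spaces $M^{\infty,q}_{\la\cdot\ra^{-m}\otimes\la\cdot\ra^s}(\rdd)$.
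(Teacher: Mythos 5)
Your argument is correct and follows essentially the same route as the paper: both pass to the discrete Gabor picture, invoke the off-diagonal decay of the Gabor matrix (you via the polynomial bound of Proposition~\ref{Th.3.4} with Peetre's inequality absorbing the weights, the paper via the envelope $h_\tau\in\ell^t_{\la\cdot\ra^s}$ from \eqref{eq:almdiag discr}), and conclude with Young's convolution inequality for weighted mixed-norm sequence spaces with exponent $t=\min\{1,p,q\}$. The two formulations of the decay are interchangeable here, so no substantive difference remains.
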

\begin{proof}
		Choose $g\in\cS(\rd)$ and a lattice $\Lambda$ such that $\cG(g,\Lambda)$ is a Gabor frame for $\lrd$. Define $t:=\min\{1,p,q\}$ and choose $s> (2d+|r|)/t$. Using the equivalent discrete (quasi-)norm for the modulation space, see e.g. \cite[Proposition 1.5]{ToftquasiBanach2017}, the estimate in \eqref{eq:almdiag discr} and Young's convolution inequality in \cite[Theorem 3.1]{Galperin2014}, we obtain the result. Namely,
	\begin{align*}
	\norm{\opt(\sigma)f}_{M^{p,q}_{\la\cdot\ra^{r}}}&\asymp\norm{V_g (\opt(\sigma)f)}_{\ell^{p,q}_{\la\cdot\ra^{r}}(\Lambda)}
	\leq \left\|h_\tau\ast\abs{V_g f}\la\cdot\ra^{\abs{m}}\right\|_{\ell^{p,q}_{\la\cdot\ra^{r}}(\Lambda)}\\
	&\leq  
	\left\|h_\tau\right\|_{\ell^t_{\la\cdot\ra^{s}(\Lambda)}}\left\|V_g f\la\cdot\ra^{m}\right\|_{\ell^{p,q}_{\la\cdot\ra^{r}(\Lambda)}}\leq  C \left\| f\right\|_{M^{p,q}_{\la\cdot\ra^{r+m}}}.
	\end{align*}	
	Alternatively, since $\sigma \in S^m= \bigcap_{s\geq 0} M^{\infty,q}_{\la \cdot\ra^{-m}\otimes \la \cdot\ra^s}(\rdd)$ by Lemma \ref{charsm00}, one can  use \cite[Theorem 3.1]{ToftquasiBanach2017} with $p=\infty$ and $q\leq1$ small enough to yield the claim.
\end{proof}

\begin{remark}
	(i) For  $\sigma \in S^0(\rdd)=S^0_{0,0}(\rdd)$ and we recapture the continuity of
	\begin{equation*}
	\opt(\sigma)\colon M^{p,q}_{\la\cdot\ra^{r}}(\rd)\to M^{p,q}_{\la\cdot\ra^{r}}(\rd).
	\end{equation*}
	This was already shown in \cite{Toftweight2004} for $p,q\geq 1$, for the quasi-Banach cases see \cite{ToftquasiBanach2017}.\\
	(ii) For $p=q=2$ we have the continuity between the Shubin-Sobolev spaces $Q_{r+m}(\rd)$ and $Q_{r}(\rd)$.\\
\end{remark}

\begin{corollary}
	Consider $\tau\in\left[0,1\right]$, $m,r\in\bR$, $\sigma\in S^m(\rdd)$, $0<p,q\leq\infty$. Let $\norm{\opt(\sigma)}$ denote the norm of $\opt(\sigma)$ in $B(M^{p,q}_{\la\cdot\ra^{r+m}}(\rd),M^{p,q}_{\la\cdot\ra^{r}}(\rd))$. Then there exists a constant $C>0$ such that
	\begin{equation}
		\norm{\opt(\sigma)}\leq C,\qquad\forall\tau\in[0,1].
	\end{equation}
\end{corollary}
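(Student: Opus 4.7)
The plan is to observe that the proof of Proposition \ref{Proposition-continuity-Shubin} already yields the uniform estimate, provided we track carefully where $\tau$ enters the constants. I would essentially recycle that argument, emphasizing the $\tau$-independence at each step.

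First, I would fix $g\in\cS(\rd)$ and a lattice $\Lambda$ such that $\cG(g,\Lambda)$ is a Gabor frame for $\lrd$; this choice does not depend on $\tau$. Then I would set $t=\min\{1,p,q\}$ and pick $s>(2d+|r|)/t$, again $\tau$-independent. Since $\sigma\in S^m(\rdd)$, Theorem \ref{Th3.3} provides, for this specific $s$ and with $q$ replaced by $t$, a sequence $h_\tau\in\ell^t_{\la\cdot\ra^s}(\Lambda)$ controlling the Gabor matrix through \eqref{Ineq-Th.3.3-continuous2}, and crucially the bound
\begin{equation*}
\|h_\tau\|_{\ell^t_{\la\cdot\ra^s}}\leq C\quad\text{for every }\tau\in[0,1],
\end{equation*}
where $C$ depends only on $\sigma$, $g$, $\Lambda$, $s$, $t$ (and on the choice of window/frame), but not on $\tau$.

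Next, I would reproduce the chain of inequalities in the proof of Proposition \ref{Proposition-continuity-Shubin}: using the discrete equivalent (quasi-)norm on $M^{p,q}_{\la\cdot\ra^r}(\rd)$, the almost-diagonalization estimate \eqref{Ineq-Th.3.3-continuous2}, and Young's convolution inequality for weighted mixed sequence spaces in the quasi-Banach setting, one obtains
\begin{equation*}
\|\opt(\sigma)f\|_{M^{p,q}_{\la\cdot\ra^r}}\lesssim \|h_\tau\|_{\ell^t_{\la\cdot\ra^s}}\,\|f\|_{M^{p,q}_{\la\cdot\ra^{r+m}}}.
\end{equation*}
Since the implicit constant in this estimate depends only on $g$, $\Lambda$, $p$, $q$, $r$, $m$, $s$ (via the norm equivalences and Young's inequality), and not on $\tau$, combining with the uniform bound on $\|h_\tau\|_{\ell^t_{\la\cdot\ra^s}}$ yields
\begin{equation*}
\|\opt(\sigma)\|_{B(M^{p,q}_{\la\cdot\ra^{r+m}},\,M^{p,q}_{\la\cdot\ra^r})}\leq C
\end{equation*}
with $C$ independent of $\tau\in[0,1]$.

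There is really no hard step here: the whole point is that the uniformity in $\tau$ was built into the characterization Theorem \ref{Th3.3} (via Proposition \ref{Pro-uniform-bounds-via-STFTtauWDgaussian}, which provides equivalent STFT-norms with constants independent of $\tau$). The only thing to verify is that the norm-equivalence constants and the Young-inequality constant in the argument are truly $\tau$-free, which is immediate from their statements. Hence the corollary follows as a direct consequence of the preceding proposition together with the $\tau$-uniform version of the Gabor matrix estimate.
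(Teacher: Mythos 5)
Your proposal is correct and follows exactly the paper's route: the paper's proof simply states that the claim is evident from the proof of Proposition \ref{Proposition-continuity-Shubin}, and you have spelled out why, namely that the uniform bound $\|h_\tau\|_{\ell^t_{\la\cdot\ra^s}}\leq C$ from Theorem \ref{Th3.3} together with the $\tau$-independence of the norm-equivalence and Young constants gives the uniform operator bound. No further comment is needed.
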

\begin{proof}
	The claim is evident from proof of Proposition \ref{Proposition-continuity-Shubin}.
\end{proof}

\subsection{Born-Jordan operators}
The Born-Jordan operator with symbol $\sigma\in\cS'(\rd)$ can be defined as
$$\la \opj(\sigma) f,g\ra =\la \sigma, W_{BJ}(g,f)\ra, \quad f,g\in\cS(\rd),$$
where the Born-Jordan distribution $ W_{BJ}(g,f)$ is
$$ W_{BJ}(g,f)=\int_0^1 W_\tau(g,f) \,d\tau,$$
see, e.g., the textbook \cite{deGossonBJ}. In what follows we study the Gabor matrix decay for Born-Jordan operators.
\begin{theorem}\label{teor41B}
	Consider  $g\in \cS(\rd)\setminus\{0\}$. For $m\in\bR$ consider $\sigma\in S^m\left(\mathbb{R}^{2d}\right)$. Then for every $s\geq 0$, $0<q\leq\infty$, $\tau\in [0,1]$ there exists
	a function $H_\tau \in L^q_{\la \cdot\ra^{s}}(\rdd)$ which satisfies \eqref{H-tau} and such that
		\begin{equation}
		\left|\left\langle \opj \left(\sigma\right)\pi\left(z\right)g,\pi\left(u\right)g\right\rangle \right|\le \la z \ra^m \int_0^1H_\tau(u-z)\, d\tau, \qquad\forall u,z\in\mathbb{R}^{2d}.\label{eq:almdiagBJ}
		\end{equation}
\end{theorem}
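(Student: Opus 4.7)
The plan is to reduce Theorem \ref{teor41B} to the corresponding result for $\tau$-pseudodifferential operators, namely Theorem \ref{teor41}, by writing the Born--Jordan Gabor matrix as a $\tau$-average and then converting the weight $\la\mathcal{T}_\tau(z,u)\ra^m$ into a product depending separately on $z$ and on the difference $u-z$.

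First I would establish the identity
\begin{equation*}
\la \opj(\sigma)\pi(z)g,\pi(u)g\ra=\int_0^1 \la \opt(\sigma)\pi(z)g,\pi(u)g\ra\,d\tau
\end{equation*}
directly from the weak definition of $\opj$, the relation $W_{BJ}=\int_0^1 W_\tau\,d\tau$, and a Fubini/continuity argument: by Proposition \ref{Pro-uniform-bounds-via-STFTtauWDgaussian} (and the continuity statement cited there from de Gosson--Toft), the map $\tau\mapsto W_\tau(\pi(u)g,\pi(z)g)$ is continuous and bounded from $[0,1]$ into $\cS(\rdd)$, so the distributional pairing with $\sigma\in\cS'(\rdd)$ commutes with the $\tau$-integral.

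Next I would invoke Theorem \ref{teor41} with weight parameter $s+|m|$ in place of $s$. Since Lemma \ref{charsm00} guarantees $\sigma\in M^{\infty,q}_{\la\cdot\ra^{-m}\otimes\la\cdot\ra^{s+|m|}}(\rdd)$ for every $s\ge 0$, the theorem delivers, uniformly in $\tau\in[0,1]$, a function $H_\tau$ with $\|H_\tau\|_{L^q_{\la\cdot\ra^{s+|m|}}}\le C$ and
\begin{equation*}
|\la \opt(\sigma)\pi(z)g,\pi(u)g\ra|\le H_\tau(u-z)\la \mathcal{T}_\tau(z,u)\ra^m.
\end{equation*}
Using the explicit formula \eqref{CL} one computes $\mathcal{T}_\tau(z,u)-z=(\tau(u_1-z_1),(1-\tau)(u_2-z_2))$, so $|\mathcal{T}_\tau(z,u)-z|\le|u-z|$ uniformly in $\tau$. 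Peetre's inequality then gives $\la\mathcal{T}_\tau(z,u)\ra^m\lesssim \la z\ra^m\la u-z\ra^{|m|}$ for every $m\in\bR$. Defining $\widetilde H_\tau(w):=C\,H_\tau(w)\,\la w\ra^{|m|}$, I would verify that
\begin{equation*}
\|\widetilde H_\tau\|_{L^q_{\la\cdot\ra^s}}\lesssim \|H_\tau\|_{L^q_{\la\cdot\ra^{s+|m|}}}\le C,
\end{equation*}
so $\widetilde H_\tau$ satisfies the uniform bound \eqref{H-tau}, and then I would rename $\widetilde H_\tau$ as $H_\tau$ in the final statement.

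Putting these pieces together and integrating in $\tau$ yields exactly \eqref{eq:almdiagBJ}. I do not expect a serious obstacle: the only delicate point is choosing the shifted weight exponent $s+|m|$ from the outset, so that the factor $\la u-z\ra^{|m|}$ produced by Peetre can be absorbed into the new $H_\tau$ without spoiling the uniform-in-$\tau$ $L^q_{\la\cdot\ra^s}$ estimate. Once this bookkeeping is set up, everything else (the Fubini step, the Peetre bound, the integration in $\tau$) is routine and the constants do not blow up because $\tau,1-\tau\in[0,1]$.
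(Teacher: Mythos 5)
Your proposal is correct and follows essentially the same route as the paper: write the Born--Jordan Gabor matrix as $\int_0^1\langle \opt(\sigma)\pi(z)g,\pi(u)g\rangle\,d\tau$ (justified by the continuity and local uniform boundedness of $\tau\mapsto W_\tau$ and absolute convergence of the resulting integral), apply the $\tau$-wise estimate from Theorem \ref{teor41}/Theorem \ref{Th3.3}, and use Peetre's inequality $\la\mathcal{T}_\tau(z,u)\ra^m\lesssim\la z\ra^m\la u-z\ra^{|m|}$ to absorb the extra factor into a redefined $H_\tau$. Your explicit choice of the shifted weight exponent $s+|m|$ just makes precise the bookkeeping the paper leaves implicit when it asserts that $H_\tau(\cdot)\la\cdot\ra^{|m|}$ still satisfies \eqref{H-tau}.
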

\begin{proof}
	For $\sigma\in\cS'(\rdd)$, $\opj(\sigma)$ is linear and continuous from $\cS(\rd)$ into $\cS'(\rd)$, see \cite{deGossonToft-BJ2020}.  For $z,u\in\rdd$, $\sigma\in S^m(\rdd)$ and $g\in\cS(\rd)$ we compute
	\begin{align*}
		\la\opj(\sigma)\pi(z)g,\pi(u)g\ra&=\la\sigma,W_{BJ}(\pi(u)g,\pi(z)g)\ra\\
		&=\int_\rdd\sigma(y)\int_0^1\overline{W_{\tau}(\pi(u)g,\pi(z)g)(y)}\,d\tau dy\eqqcolon I.
	\end{align*}
	From \cite[Proposition 2.2, Remark 2.3]{deGossonToft-BJ2020} we have that the mapping
	\begin{equation*}
		\bR\times\cS(\rd)\times\cS(\rd)\to\cS(\rdd),\quad(t,\varphi,\psi)\mapsto W_t(\varphi,\psi)
	\end{equation*} 
	is continuous and locally uniformly bounded. Thus $W_{BJ}(\varphi,\psi)\in\cS(\rdd)$ and the integral $I$ is absolutely convergent, so that
	\begin{equation*}
		I=\int^1_0\int_\rdd \sigma(y)\overline{W_{\tau}(\pi(u)g,\pi(z)g)(y)}\,dyd\tau=\int^1_0\left\langle \opt \left(\sigma\right)\pi\left(z\right)g,\pi\left(u\right)g\right\rangle\,d\tau.
	\end{equation*}
By Peetre's inequality:
	\begin{align*}
	\la\mathcal{T}_{\tau}(z,u)\ra^m&=\la z_1+\tau(u_1-z_1),z_2+(1-\tau)(u_2-z_2)\ra^m\\
	&\lesssim\la z\ra^m\la u-z\ra^{|m|},
	\end{align*} 
	for every $u=(u_1,u_2), \,z=(z_1,z_2)\in\rdd$. Hence, 	using Theorem \ref{Th3.3},
	\begin{equation*}
		\abs{I}\leq\int^1_0\abs{\left\langle \opt \left(\sigma\right)\pi\left(z\right)g,\pi\left(u\right)g\right\rangle}\,d\tau\lesssim \int^1_0 H_\tau(u-z)\,\la u-z\ra^{|m|}  \,d\tau\,\la z\ra^m.
	\end{equation*} 
Then the function $H_\tau(z) \,\la z\ra^{|m|}$ satisfies  condition \eqref{H-tau}.
\end{proof}
\begin{remark}
	(i) For $q\geq 1$, we can define $H(z):=\int_0^1 H_\tau(z)d\tau$.  Using Minkowski's integral inequality we infer $H\in L^q_{\la\cdot\ra^s}(\rdd)$ and the estimate \eqref{eq:almdiagBJ} becomes 
		\begin{equation*}
	\left|\left\langle \opj \left(\sigma\right)\pi\left(z\right)g,\pi\left(u\right)g\right\rangle \right|\le  H (u-z)\, \la z \ra^m, \qquad\forall u,z\in\mathbb{R}^{2d}.
	\end{equation*}
 Notice that for $0<q<1$ Minkowski's integral inequality is not true in general. \par \noindent
(ii) Arguing as in Theorem \ref{teor41B}, we may discretize the Gabor matrix decay in \eqref{eq:almdiagBJ} as follows:   consider $g\in \cS(\rd)\setminus\{0\}$ and a lattice $\Lambda $ in $\rdd$ such that  $\mathcal{G}\left(g,\Lambda\right)$ is a  Gabor frame for $L^{2}\left(\mathbb{R}^{d}\right)$. If $\sigma\in S^m\left(\mathbb{R}^{2d}\right)$  then for every $s\geq 0$, $0<q\leq\infty$,  there exists
a sequence $h_\tau\in \ell^q_{\la \cdot \ra^s}(\Lambda)$ with $\|h_\tau \|_{\ell^q_{\la \cdot \ra^s}}\leq C$ for every $\tau \in [0,1]$ such that
\begin{equation*}
\left|\left\langle \opj \left(\sigma\right)\pi\left(\mu\right)g,\pi\left(\lambda\right)g\right\rangle \right|\leq\la\mu\ra^m\int_0^1 h_\tau(\lambda-\mu)d\tau, \qquad\forall\lambda,\mu\in\Lambda.
\end{equation*}
\end{remark}

\begin{corollary}\label{Cor-Continuity-BJ}
	Consider $m\in\bR$, $\sigma\in S^m(\rdd)$, $0<p,q\leq\infty$. Then $\opj(\sigma)$, from $\cS(\rd)$ to $\cS'(\rd)$,  extends uniquely to a bounded operator  
	\begin{equation*}
	\opj(\sigma)\colon M^{p,q}_{\la\cdot\ra^{r+m}}(\rd)\to M^{p,q}_{\la\cdot\ra^{r}}(\rd),
	\end{equation*}
	for every $r\in\bR$.
\end{corollary}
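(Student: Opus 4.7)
The strategy is to parallel the proof of Proposition \ref{Proposition-continuity-Shubin}, replacing the $\tau$-Gabor matrix estimate by the Born-Jordan one established in Theorem \ref{teor41B}. Fix a Gabor frame $\cG(g,\Lambda)$ with $g\in\cS(\rd)$, set $t:=\min\{1,p,q\}$, and choose $s$ large enough (for instance $s>(2d+|r|)/t$) so that the weighted Young convolution inequality of \cite[Theorem 3.1]{Galperin2014} applies on $\ell^{p,q}_{\la\cdot\ra^r}(\Lambda)$. By the discretized form of the Gabor matrix bound (remark (ii) after Theorem \ref{teor41B}) we have
\begin{equation*}
\left|\left\langle \opj(\sigma)\pi(\mu)g,\pi(\lambda)g\right\rangle\right|\leq \la\mu\ra^{m}\int_{0}^{1}h_{\tau}(\lambda-\mu)\,d\tau,\qquad\lambda,\mu\in\Lambda,
\end{equation*}
with $\|h_{\tau}\|_{\ell^{t}_{\la\cdot\ra^{s}}}\leq C$ uniformly in $\tau\in[0,1]$.

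Using the equivalent discrete (quasi-)norm on modulation spaces together with the pointwise estimate above, one controls $|V_{g}(\opj(\sigma)f)(\lambda)|$ at each lattice point $\lambda$ by $\int_{0}^{1}\bigl(h_{\tau}\ast(\la\cdot\ra^{m}|V_{g}f|)\bigr)(\lambda)\,d\tau$. For each fixed $\tau$ the weighted Young inequality gives $\|h_{\tau}\ast(\la\cdot\ra^{m}|V_{g}f|)\|_{\ell^{p,q}_{\la\cdot\ra^{r}}}\leq C\|f\|_{M^{p,q}_{\la\cdot\ra^{r+m}}}$ uniformly in $\tau$. Integrating over $\tau$ and combining with the Gabor frame inversion would then yield the stated boundedness, just as in Proposition \ref{Proposition-continuity-Shubin}.

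The main obstacle is the passage from the pointwise $\tau$-integrated bound to the modulation norm when $0<q<1$ (or $0<p<1$), where Minkowski's integral inequality is unavailable. In the Banach range $p,q\geq 1$ this step is immediate: one either applies Minkowski directly to $\int_{0}^{1}h_{\tau}\,d\tau$, obtaining $\|\int_{0}^{1}h_{\tau}\,d\tau\|_{\ell^{t}_{\la\cdot\ra^{s}}}\leq C$, or invokes Proposition \ref{Proposition-continuity-Shubin} together with the uniform-norm Corollary and integrates the operator-valued map $\tau\mapsto\opt(\sigma)f$. In the quasi-Banach range this workaround is replaced by the $r$-convexity of $M^{p,q}_{\la\cdot\ra^{r}}$ with $r=\min\{1,p,q\}$: the inequality $\|\sum_{i}x_{i}\|^{r}\leq\sum_{i}\|x_{i}\|^{r}$ (valid in any $r$-normed space) passes to Riemann sums and hence to the continuous curve $\tau\mapsto\opt(\sigma)f$, giving
\begin{equation*}
\|\opj(\sigma)f\|^{r}_{M^{p,q}_{\la\cdot\ra^{r}}}\leq\int_{0}^{1}\|\opt(\sigma)f\|^{r}_{M^{p,q}_{\la\cdot\ra^{r}}}\,d\tau\leq C^{r}\|f\|^{r}_{M^{p,q}_{\la\cdot\ra^{r+m}}},
\end{equation*}
where the uniform constant $C$ is the one from the Corollary to Proposition \ref{Proposition-continuity-Shubin}. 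The continuity of $\tau\mapsto\opt(\sigma)f$ required to justify the Riemann integration follows from the continuity of $\tau\mapsto\Phi_{\tau}=W_{\tau}(g,g)$ in $\cS(\rdd)$ (already invoked in Proposition \ref{Pro-uniform-bounds-via-STFTtauWDgaussian}) combined with Lemma \ref{lem:STFT-gaborm}. Extending the resulting estimate from $\cS(\rd)$ to all of $M^{p,q}_{\la\cdot\ra^{r+m}}(\rd)$ by density (or by weak-$\ast$ continuity when $p=\infty$ or $q=\infty$) completes the argument.
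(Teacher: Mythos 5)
Your overall strategy is the paper's: discretize the Born--Jordan Gabor matrix bound from Theorem \ref{teor41B}, run the weighted Young convolution estimate of Proposition \ref{Proposition-continuity-Shubin}, and isolate the $\tau$-integration as the only new difficulty. In the Banach range your treatment is fine. The problem is the quasi-Banach step. The $r$-convexity inequality $\norm{\sum_i x_i}^r\le\sum_i\norm{x_i}^r$ does \emph{not} pass to integrals: applied to a Riemann sum $\sum_i F_{\tau_i}\Delta\tau$ it gives $\norm{\sum_i F_{\tau_i}\Delta\tau}^r\le(\Delta\tau)^r\sum_i\norm{F_{\tau_i}}^r=(\Delta\tau)^{r-1}\sum_i\norm{F_{\tau_i}}^r\Delta\tau$, and for $r<1$ the factor $(\Delta\tau)^{r-1}$ diverges as the partition is refined, so the right-hand side tends to $+\infty$ rather than to $\int_0^1\norm{F_\tau}^r\,d\tau$. (For nonnegative integrands in $L^p$ with $p<1$ the \emph{reverse} Minkowski integral inequality holds, so no bound of the proposed form can be expected in general.) Hence your displayed inequality $\norm{\opj(\sigma)f}^{r}_{M^{p,q}_{\la\cdot\ra^{r}}}\le\int_0^1\norm{\opt(\sigma)f}^{r}_{M^{p,q}_{\la\cdot\ra^{r}}}\,d\tau$ is unjustified, and the quasi-Banach case is not covered; this is precisely the failure of Minkowski's integral inequality that the paper flags in the remark preceding the corollary.

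The paper's fix avoids integrating norms of operator images altogether and performs the $\tau$-integration on the scalar kernel instead. Set $H:=\int_0^1 h_\tau\,d\tau$ on $\Lambda$. Since $\sigma\in S^m$, Theorem \ref{Th3.3} gives $h_\tau\in\ell^{q}_{\la\cdot\ra^{s}}(\Lambda)$ uniformly in $\tau$ for \emph{every} $s\ge0$ and every $0<q\le\infty$; in particular $\sup_{\tau\in[0,1]}\|h_\tau\|_{\ell^1_{\la\cdot\ra^{\tilde s}}}\le C$ for any $\tilde s\ge0$. Minkowski's integral inequality is legitimate in the Banach space $\ell^1_{\la\cdot\ra^{\tilde s}}$, so $\|H\|_{\ell^1_{\la\cdot\ra^{\tilde s}}}\le C$; choosing $\tilde s$ large enough that $\ell^1_{\la\cdot\ra^{\tilde s}}(\Lambda)\hookrightarrow\ell^t_{\la\cdot\ra^{s}}(\Lambda)$ via \eqref{inclusion} (it suffices that $\tilde s> s+2d(1-t)/t$) yields $\|H\|_{\ell^t_{\la\cdot\ra^{s}}}\le C$, and then a \emph{single} application of the weighted Young inequality with the fixed kernel $H$ concludes exactly as in Proposition \ref{Proposition-continuity-Shubin}. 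Replacing your $r$-convexity paragraph with this kernel-level argument makes your proof coincide with the paper's.
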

\begin{proof}
	The proof is similar to the one of Proposition \ref{Proposition-continuity-Shubin}, using the decay for Gabor matrix of $\opj(\sigma)$ found in Theorem \ref{teor41B}, with $h_\tau$ replaced by $\int_0^1 h_\tau(\cdot)d\tau$. Then, for $t\geq 1$ we use Minkowski's inequality to write $$\left\|\int_0^1 h_\tau(\cdot)d\tau\right\|_{\ell^t_{\la \cdot\ra^s}}\leq \int_0^1 \| h_\tau\|_{\ell^t_{\la \cdot\ra^s}}d\tau \leq C.$$
	 For $t<1$  we use  the inclusion relations \eqref{inclusion} and majorize  
	$$\left\|\int_0^1 h_\tau(\cdot)d\tau\right\|_{\ell^t_{\la \cdot\ra^s}}\lesssim \left\|\int_0^1 h_\tau(\cdot)d\tau\right\|_{\ell^1_{\la \cdot\ra^{\tilde{s}}}},$$
	with $\tilde{s}\geq 0$ such that $1/t+s/(2d)<1+\tilde{s}/(2d)$, that is $$\tilde{s}>\frac{2d}{t}(1-t),$$
	and we proceed as above.
\end{proof}

\section*{Acknowledgements}
The authors would like to thank Fabio
Nicola and S. Ivan Trapasso  for fruitful conversations and comments. The authors are very grateful to the reviewers for their comments and in particular for the improvements of Proposition \ref{Proposition-continuity-Shubin}. \par 
The first author was partially supported by MIUR grant Dipartimenti di Eccellenza 20182022, CUP: E11G18000350001, DISMA, Politecnico di Torino.

\end{document}